\documentclass[11pt]{article}
\usepackage{amsmath,amsfonts,amsthm,amssymb,mathtools}
\usepackage{mathrsfs,graphicx,color,latexsym,tikz,calc}
\usepackage[colorlinks,bookmarksopen,bookmarksnumbered,citecolor=blue,linkcolor=red,urlcolor=blue]{hyperref}
\usepackage{enumitem}
\usepackage{authblk}
\usetikzlibrary{shadows}
\usetikzlibrary{patterns,arrows,decorations.pathreplacing}

\newtheorem{definition}{Definition}
\newtheorem{theorem}{Theorem}
\newtheorem{lemma}{Lemma}

\newtheorem{conjecture}{Conjecture}

\newtheorem{prop}{Proposition}

\newcommand{\spl}{\operatorname{spl}}
\newcommand{\Nmono}{N_{\mathrm{mono}}}

\begin{document}

\title{Ramsey multiplicity and extremal colorings for odd cycles}
\author{Ting HUANG$^1$, Junying LU$^{2,}$\footnote{Corresponding author. Email: junyinglu@njust.edu.cn}~, Jiabao YANG$^1$, Yaojun CHEN$^1$\\
 {\small{$^1$School of Mathematics, Nanjing University, Nanjing 210093, P.R. CHINA}}\\
 {$^2$\small{School of Mathematics and Statistics, Nanjing University of Science and Technology,\\
Nanjing 210094, P.R. CHINA}}\\
 }
\date{ }

\maketitle

\begin{abstract}
The Ramsey number $r(H)$ of a graph $H$ is the minimum positive integer $N$ such that every red/blue edge-coloring of the complete graph $K_N$ on $N$ vertices contains a monochromatic copy of $H$.  
The Ramsey multiplicity $M(H,n)$ is the minimum number of monochromatic copies of $H$ over all red/blue edge-colorings of $K_n$.
It is called threshold Ramsey multiplicity if $n=r(H)$, and denoted by $m(H)$.
The only previously known general infinite family for which $m(H)$ has been determined is stars, due to Harary and Prins (1974). Let $C_k$ denote a cycle on $k$ vertices. Conlon, Fox, Sudakov, and Wei (2022) conjectured that $m(C_k)=(k-1)!/2$ for every sufficiently large odd integer $k$. 
In this paper,  we determine $M(C_k,r(C_k)+\ell)$ for every fixed nonnegative integer $\ell$ and all sufficiently large odd $k$, and characterize all extremal colorings,  thereby confirming the conjecture.
This is also a second general infinite family for which $m(H)$ has been determined.

\vskip 2mm
\noindent {\it AMS classification:} 05C55, 05C35\\[1mm]
\noindent {\it Keywords:} Threshold Ramsey multiplicity, Odd cycle, Extremal colorings 
\end{abstract}

\baselineskip=0.202in

\section{Introduction}\label{sec-intr}

The Ramsey number $r(H)$ of a graph $H$ is the least positive integer $n$
such that every red/blue coloring of the edges of $K_n$ contains a
monochromatic copy of $H$. Although Ramsey's theorem guarantees that
$r(H)$ is finite, exact values are known for only a few general families.
Two classic examples are paths and cycles. Gerencs\'er and
Gy\'arf\'as~\cite{Gerencser1967} determined the Ramsey numbers of paths,
while Rosta~\cite{Rosta1973} and, independently, Faudree and
Schelp~\cite{Faudree1974} determined the Ramsey numbers of cycles. In
particular, let $C_k$ denote the cycle on $k$ vertices.

\begin{theorem}[Faudree and Schelp \cite{Faudree1974}, Rosta \cite{Rosta1973}]\label{thm:Ramsey-cycles}
For every odd integer $k\geq 5$,
$r(C_k)=2k-1$. 
\end{theorem}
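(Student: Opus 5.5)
The plan has two halves: a construction showing $r(C_k)>2k-2$, and an argument that every red/blue coloring of $K_{2k-1}$ contains a monochromatic $C_k$.

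\textbf{Lower bound.} Split $2k-2$ vertices into two sets $A,B$ with $|A|=|B|=k-1$. Color every edge inside $A$ and inside $B$ red, and every edge between $A$ and $B$ blue. Each red component has only $k-1$ vertices, so there is no red $C_k$. The blue graph is bipartite and $k$ is odd, so there is no blue $C_k$. Hence $r(C_k)\ge 2k-1$.

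\textbf{Upper bound.} Fix a coloring of $K_{2k-1}$ with no monochromatic $C_k$; I would argue by induction on odd $k$, with base case $k=5$ checked directly. The steps are as follows.

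\begin{enumerate}
\item Since $r(C_{k-2})=2k-5\le 2k-1$, the coloring contains a monochromatic, say red, $C_{k-2}$; call it $C$.
\item A standard pigeonhole/Turán-type fact (Erd\H{o}s--Gallai) gives a long monochromatic path. From this I expect to extract a red or blue cycle of length $k-1$ or $k+1$, possibly even, that sits next to many vertices of the same color.
\item The cleaner route I would actually take is a \emph{cycle extension lemma}. If a red $C_{m}$ with $m<k$ exists, look at the $2k-1-m$ outside vertices. Any outside vertex $v$ with red edges to two consecutive cycle vertices, or to $x_i$ and $x_{i+2}$, lengthens the cycle by one or two. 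Iterating the extension produces red cycles of every length up to the point where it fails.
\item When extension fails, each outside vertex sends blue edges to roughly half of $C$. Pigeonhole then yields two large sets $A,B$ with all edges between them blue. The dense blue bipartite structure, together with the outside vertices, contains blue paths of all lengths between consecutive vertices. Any single blue edge inside $A$ or inside $B$ then closes a blue odd cycle of length exactly $k$. So all edges inside $A$ and inside $B$ are red.
\item Now the counting finishes it. We have $|A|+|B|=2k-1$, so one part has at least $k$ vertices, and that part contains a red $K_k\supseteq C_k$, a contradiction.
\end{enumerate}

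\textbf{Main obstacle.} The hard part is the stability step: showing that the coloring is close to the bipartite extremal configuration once odd extensions are blocked. This requires careful parity bookkeeping, because even cycles in one color must be converted into odd ones of length exactly $k$. I would first try Bondy's pancyclicity theorem on the color class with at least half the edges, which has roughly $(2k-1)^2/4$ edges. This yields either cycles of all lengths, and hence a $C_k$, or a complete bipartite structure, and that structure forces the configuration in step 4. Since the result is classical (Rosta; Faudree--Schelp), in the paper it would simply be cited rather than reproved.
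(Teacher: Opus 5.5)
Your lower bound is exactly the paper's remark that $\chi_0(k-1,k-1)$ colors $K_{2k-2}$ with no monochromatic $C_k$. You are also right that the paper does not reprove the upper bound; it simply cites Rosta and Faudree--Schelp. As a proof of $r(C_k)\le 2k-1$, however, your sketch has genuine gaps. In step 3, a vertex $v$ red to $x_i$ and $x_{i+2}$ does not lengthen the cycle: replacing $x_ix_{i+1}x_{i+2}$ by $x_ivx_{i+2}$ gives another cycle of the same length. So of your two cases only the consecutive-neighbor one extends the cycle, and reaching $C_k$ from $C_{k-2}$ needs two extensions. The second starts from an even cycle $C_{k-1}$, and you do not analyze its failure separately. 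More seriously, in step 4 a failed extension of the red $C_{k-2}$ tells you only one thing: each of the $k+1$ outside vertices has an independent red neighborhood on this odd cycle, hence is blue to at least $(k-1)/2$ of its $k-2$ vertices. That is a blue bipartite graph of density about $1/2$. No pigeonhole argument turns it into two large sets $A,B$ with \emph{all} edges between them blue. Producing that global structure is the whole difficulty of the theorem (the classical proofs do it by a long case analysis), and here it is asserted rather than derived.

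Step 5 has a further gap: $A$ and $B$ were only ``large'', so $|A|+|B|=2k-1$ is unjustified. The remaining vertices must be placed by an argument like the last paragraph of the proof of Proposition~\ref{prop:near-split}. There, a vertex with blue neighbors in both parts closes a blue $C_k$ through an alternating path; otherwise it is red to all of one side and can be absorbed. This step, and the blue-edge argument of step 4, also need explicit size bounds such as $|A|\ge(k+1)/2$ and $|B|\ge(k-1)/2$. Finally, the Bondy fallback does not apply. Bondy's theorem needs a Hamiltonian graph on $n$ vertices with at least $n^2/4$ edges. For $n=2k-1$ the majority color is only guaranteed $(2k-1)(k-1)/2<(2k-1)^2/4$ edges, and nothing in your argument makes either color class Hamiltonian. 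In $\chi_0(k,k-1)$, for instance, the red graph is disconnected and the blue graph $K_{k,k-1}$ is not Hamiltonian. So the upper bound is not established: either cite it, as the paper does, or supply the stability step in full.
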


The Ramsey number $r(H)$ records only the smallest order at which a
monochromatic copy of $H$ becomes unavoidable. At this threshold,
however, a coloring may be forced to contain many monochromatic copies of $H$. To measure this phenomenon, for a red/blue edge-coloring $\chi: E(K_n)\to\{\mathrm{red},\mathrm{blue}\}$, let $\Nmono(H,\chi)$ denote the total number of monochromatic copies of $H$ in $\chi$.

For a graph $H$ and a positive integer $n$, define
\[
M(H,n)=
\min_{\chi:E(K_n)\to\{\mathrm{red},\mathrm{blue}\}}
\Nmono(H,\chi).
\]
Thus $M(H,n)$ is the minimum number of monochromatic copies of $H$
over all red/blue edge-colorings of $K_n$. By the definition of the
Ramsey number, $M(H,n)=0$ if and only if $n<r(H)$.

\begin{definition}[Threshold Ramsey multiplicity]
The \emph{threshold Ramsey multiplicity} of a graph $H$ is the first
positive value of $M(H,n)$, that is,
\[
m(H):=M(H,r(H)).
\]
\end{definition}

Thus $r(H)$ asks when the first monochromatic copy must occur, whereas
$m(H)$ asks how many copies must occur at precisely that threshold.
Let $P_k$ denote the path on $k$ vertices and $K_{1,k}$ denote the star with one central vertex and $k$ leaves.
Harary and Prins~\cite{HararyPrins} initiated the systematic study of
this parameter and asked, in particular, for the determination of
$m(P_k)$ and $m(C_k)$. And they determined the exact value of $m(K_{1,k})$. This is the only previously known general infinite family for which $m(H)$ has been determined.
\begin{theorem}[Harary and Prins \cite{HararyPrins}]
For every integer $k\geq1$,
\[
m(K_{1,k})=
\begin{cases}
1,  & \text{if $k=1$ or $k$ is even},\\[1mm]
2k, & \text{if $k\geq3$ is odd}.
\end{cases}
\]
\end{theorem}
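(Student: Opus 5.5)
The plan has two parts. First I determine $r(K_{1,k})$, and then I count monochromatic stars at that threshold, together with a matching construction.

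\emph{Ramsey number.} A monochromatic $K_{1,k}$ exists exactly when some vertex has at least $k$ incident edges of one color. In $K_n$ each vertex has degree $n-1$, so if $n-1\ge 2k-1$ then pigeonhole gives $k$ edges of one color at every vertex. If $n-1=2k-2$, we can avoid a monochromatic star exactly when every vertex has red degree $k-1$, that is, when there is a $(k-1)$-regular graph on $2k-1$ vertices. Such a graph exists iff $(k-1)(2k-1)$ is even, i.e. iff $k$ is odd; a circulant works. Hence $r(K_{1,k})=2k$ for odd $k$, and $r(K_{1,k})=2k-1$ for even $k$. For $k=1$ we have $r=2$, and the single edge gives $m=1$, which we set aside.

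\emph{Odd $k\ge3$, so $n=2k$.} Every vertex has degree $2k-1$, so one color has degree at least $k$ there. A vertex whose red degree $d$ satisfies $d\ge k$ contributes $\binom{d}{k}$ red stars, and symmetrically for blue (counted by its blue degree). Each vertex therefore contributes at least $\binom{k}{k}=1$ monochromatic star, and exactly one iff its degrees are split $k$ and $k-1$. The total is at least $2k$. Equality holds when the red graph is a graph on $2k$ vertices with all degrees in $\{k-1,k\}$. The simplest choice is a $k$-regular graph on $2k$ vertices, which exists because $2k\cdot k$ is even (for instance $K_{k,k}$). Then each vertex is the center of exactly one red $K_{1,k}$ and no blue one, so $m(K_{1,k})=2k$. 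For $k\ge2$, distinct centers give distinct copies, so no double counting occurs.

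\emph{Even $k$, so $n=2k-1$.} Every vertex has degree $2k-2$. By the parity argument above, the red graph cannot be $(k-1)$-regular, since the number of odd-degree vertices must be even while $2k-1$ is odd. So some vertex has a color degree of at least $k$, and the count is at least $1$. For the upper bound, I take a red graph on $2k-1$ vertices with one vertex of degree $k$ and all others of degree $k-1$. The degree sum is $k+(2k-2)(k-1)$, which is even, and realizability follows from Erd\H{o}s--Gallai or an explicit construction: take a $(k-1)$-regular graph on $2k-2$ vertices, remove a matching edge $uv$, and join a new vertex... That attempt fails because the new vertex needs degree $k$.

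The only nonroutine step is exhibiting this near-regular graph explicitly, and I would handle it as follows. Start from a circulant $(k-2)$-regular graph $G_0$ on $2k-2$ vertices. Add a perfect matching, giving a $(k-1)$-regular graph. Then add a vertex $w$ adjacent to $k$ vertices $x_1,\dots,x_k$, and delete $k/2$ matching edges among the $x_i$ to restore their degrees to $k-1$. The matching edges must be chosen so that this works, which is possible because $k$ is even. The resulting coloring has the unique center $w$, with red degree exactly $k$, so exactly one monochromatic star. Hence $m=1$.
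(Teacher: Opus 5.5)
The paper does not prove this theorem. It is quoted from Harary and Prins as background, so there is no internal argument to compare with. Your proof is correct and self-contained. For $k\ge2$ every monochromatic $K_{1,k}$ has a unique center, so the number of monochromatic copies is $\sum_v\bigl(\binom{d_R(v)}{k}+\binom{d_B(v)}{k}\bigr)$. Everything then reduces to degree sequences and handshake parity, and both the odd case ($r=2k$, bound $2k$, equality via red $K_{k,k}$) and the even-case lower bound $m\ge1$ are handled correctly.

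A few points need tidying.

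\begin{itemize}
\item For even $k$ you assert $r(K_{1,k})=2k-1$ but only argue the upper bound. The lower bound needs a coloring of $K_{2k-2}$ with red $(k-1)$-regular and blue $(k-2)$-regular. Your intermediate graph ($G_0$ plus a perfect matching), or simply red $K_{k-1,k-1}$, supplies this, and you should say so explicitly.
\item Delete the abandoned first construction.
\item In the final construction, state that the added perfect matching must avoid $E(G_0)$. The antipodal matching $i\mapsto i+k-1$ in $\mathbb{Z}_{2k-2}$ works, since $G_0$ uses only distances up to $(k-2)/2$.
\item State also that $x_1,\dots,x_k$ are exactly the $2\cdot(k/2)=k$ endpoints of $k/2$ edges of that matching. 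This is possible since the matching has $k-1\ge k/2$ edges. Each $x_i$ then loses one matching edge and gains $wx_i$.
\item An equivalent, shorter construction: take red $K_{k-1,k-1}$, delete $k/2$ edges of a perfect matching, and join their endpoints to $w$.
\item Record why exactly one monochromatic star results. In $K_{2k-1}$ every vertex other than $w$ has red and blue degree both equal to $k-1$, while $w$ has red degree $k$ and blue degree $k-2$. The only copy is therefore the red star at $w$.
\end{itemize}
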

This shows that threshold Ramsey multiplicity is sensitive even for
simple graph families and is considerably subtler than the corresponding
Ramsey number.


The threshold Ramsey multiplicity of odd cycles has been extensively studied. Harary and Prins \cite{HararyPrins} determined $m(C_3)=2$. This isolated case,
however, does not indicate the asymptotic behavior for long odd cycles.
For odd cycles of arbitrary length, the first general progress was made
by Rosta and Sur\'anyi~\cite{RostaSuranyi}. They proved that $m(C_k)$ grows
at least exponentially with $k$.

\begin{theorem}[Rosta and Sur\'anyi \cite{RostaSuranyi}]
There is an absolute constant $c>0$ such that, for every sufficiently
large odd integer $k$,
\[
m(C_k)\geq 2^{ck}.
\]
\end{theorem}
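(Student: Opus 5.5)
The plan is to show that every red/blue coloring $\chi$ of $K_{2k-1}$, with $k$ large and odd, contains at least $2^{ck}$ monochromatic $C_k$'s. The idea is to take one monochromatic cycle, which exists by Theorem~\ref{thm:Ramsey-cycles}, and use the structure that is forced around it to generate exponentially many others. The natural tool is stability for odd-cycle Ramsey colorings. If $\chi$ is far from the extremal pattern (two red cliques of order $k-1$ joined by blue edges, or the analogous pattern), then regularity or robust-expansion arguments produce a monochromatic structure containing $C_k$ in $2^{\Omega(k)}$ ways. Concretely, we would find a long monochromatic path system with many ``chords'', and switch between alternative subpaths independently.

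The proof splits into two cases. First, suppose some color class, say red, contains a subgraph on at least $(1-\varepsilon)k$ vertices with minimum degree above $(1/2+\varepsilon)$ times its order that is non-bipartite. Pancyclicity results of Bondy type then give a red $C_k$. Moreover, removing any one of linearly many disjoint ``swap gadgets'' (short paths with two alternatives) still leaves a dense enough graph, so $C_k$'s can be counted as $2^{\Omega(k)}$: each gadget is routed one of two ways, giving distinct vertex sets or edge sets. Second, if neither color has such a dense non-bipartite part, then the coloring is close to the extremal configuration. In that case the vertex set splits as $A\cup B\cup\{v\}$ with $|A|,|B|\approx k-1$, mostly red inside the parts and blue between them. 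The extra vertex $v$, since $2k-1$ exceeds $2(k-1)$, forces either a red odd cycle through $v$ in a nearly complete red graph on at least $k$ vertices, or a blue structure spanning both parts with an odd cycle. In either case the monochromatic $C_k$ lives in an almost complete graph on about $k$ vertices. An almost complete graph on $k$ vertices contains roughly $(k-1)!/2$ Hamiltonian cycles, far exceeding $2^{ck}$.

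The main obstacle is the intermediate regime, where a color class is dense but possibly bipartite-like with few odd cycles of exact length $k$. To handle it, I would first apply the Erdős–Gallai/Bondy–Simonovits style fact that a graph with minimum degree above $n/2$ on the relevant vertex set contains cycles of all lengths, and then count via the multiple choices of closing edges. Making the exceptional sets small enough that $k$ itself, rather than merely $k\pm O(\varepsilon k)$, is achievable requires careful absorption. This is the step I expect to consume most of the work, since it needs an absorbing path construction with $\Omega(k)$ independent binary choices.
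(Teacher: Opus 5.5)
The paper only cites this theorem; internally it follows, in much stronger form, from Theorem~\ref{thm:main} with $\ell=0$. That proof splits colorings into $\lambda_*$-near-split ones (Proposition~\ref{prop:near-split}) and the rest (Proposition~\ref{prop:far-split}). Your plan has this top-level shape, but the dichotomy you actually formulate is false, and the far-from-split case, which is the heart of the matter, is not handled.

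\textbf{The case split.} Your Case~1 allows a subgraph of order $(1-\varepsilon)k<k$, which cannot contain $C_k$. For example, a red $K_{k-1}$ with every other edge blue meets your hypothesis for red but has no red $C_k$.

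More seriously, requiring order at least $k$ does not fix the dichotomy, because ``otherwise the coloring is close to extremal'' fails. Take a uniformly random coloring of $K_{2k-1}$. Hoeffding's inequality and a union bound over fewer than $4^k$ vertex sets show that, with high probability, every set of $s\ge(1-\varepsilon)k$ vertices spans at most $(1/2+\varepsilon/2)\binom{s}{2}$ edges of each color. So neither color has a subgraph with minimum degree above $(1/2+\varepsilon)$ times its order, yet the coloring is far from every split coloring. The same example shows that the minimum-degree-above-$n/2$ tool in your last paragraph is not available, and the ``absorption'' you defer there is never specified. The central stability step is therefore missing, and in the form you state it, untrue.

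\textbf{The missing idea.} You need a density-level criterion rather than a Dirac-type one. The paper proceeds as follows.
\begin{enumerate}
\item After regularity, Lemma~\ref{lem:template-near-equality} shows: if the heavier color has no connected non-bipartite component of capacity greater than one (i.e.\ clusters totalling more than $N/2$ vertices), then the template is close to split.
\item Lemma~\ref{lem:robust-core} then supplies, possibly in the other color, weights with the margin $d_{w,J}(i)\le(1-\sigma)\mu_w(J)$.
\item With this margin, Lemma~\ref{lem:euler-words} yields a closed walk of length exactly $k$ (parity comes from one added odd cycle) visiting each cluster at most $(1-\sigma/2)|V_i|$ times.
\item Lemma~\ref{lem:lifting} lifts even a single such walk to $(c'k)^k$ monochromatic copies of $C_k$ for some constant $c'>0$.
\end{enumerate}
For your weaker target, no swap gadgets or entropy counting are needed.

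\textbf{The near-extremal case.} This part is also too coarse. A near-split coloring has $O(\lambda k)$ exceptional vertices, $O(\lambda k^2)$ wrongly colored edges, and parts of size $k\pm O(\lambda k)$. Density $1-\lambda$ inside a part does not even guarantee a Hamilton cycle there. So the parity count with a single extra vertex is not yet available.

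The paper reaches that exact structure by deleting low-degree vertices and then showing that each local defect produces many cycles:
\begin{itemize}
\item a wrong-colored edge inside a cleaned part (Lemma~\ref{lem:wrong-edge});
\item two disjoint short connectors in the internal color (Lemma~\ref{lem:two-connectors});
\item an outside vertex joined in the cut color to both parts (Lemma~\ref{lem:external-two-path}).
\end{itemize}
Only when no defect occurs is $V(K_N)$ a union of two monochromatic cliques, one of which has order at least $k$.
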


The first improvement beyond the exponential scale was obtained by
Rosta in an unpublished work. 
It was explicitly recorded by K\'arolyi and
Rosta~\cite{KarolyiRosta} and later by Conlon, Fox, Sudakov and
Wei~\cite{ConlonFoxSudakovWei}.

\begin{theorem}[Rosta]
As $k$ tends to infinity through the odd integers,
\[
m(C_k)^{1/k}\longrightarrow\infty.
\]
Equivalently, $m(C_k)$ grows
superexponentially in $k$.
\end{theorem}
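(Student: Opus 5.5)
The plan is to prove a counting strengthening of Theorem~\ref{thm:Ramsey-cycles}. We show that for every constant $C>0$ and every sufficiently large odd $k$, every red/blue coloring of $K_{2k-1}$ contains at least $C^k$ monochromatic copies of $C_k$. The basic principle is to find $C_k$ greedily, vertex by vertex. If a constant fraction of the steps have at least $ck$ admissible choices, then we obtain at least $(ck)^{\Omega(k)}$ labelled copies. Dividing by the $2k$ labellings of each cycle still leaves a superexponential number of copies. So the task is to locate, in every coloring, a monochromatic structure that admits such a greedy embedding with linearly many choices at most steps.

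\textbf{Step 1 (regularity).} I would follow Łuczak's connected-matching approach to cycle Ramsey numbers. Apply the two-colored regularity lemma with small $\varepsilon$ to $K_{2k-1}$, obtaining a reduced graph $\Gamma$ on $t$ clusters. Color each edge of $\Gamma$ by a majority color of density at least $1/2-\varepsilon$. The standard argument behind $r(C_k)=2k-1$ gives a dichotomy.
\begin{enumerate}
\item[(a)] One color contains a connected matching in $\Gamma$ that lies in a non-bipartite component and covers more than $(1+\gamma)\frac{k}{2k-1}t$ clusters, for some $\gamma>0$ depending only on $\varepsilon$.
\item[(b)] The coloring is $\delta$-close to the extremal pattern: two near-complete red sets of size about $k-1$ joined by an almost complete blue bipartite graph, or a small number of variants of this.
\end{enumerate}

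\textbf{Step 2 (counting in the non-extremal case).} In case (a), embed $C_k$ by walking around the connected matching, using the odd cycle in the component to fix parity. In the bulk of the embedding, each new vertex must lie in a cluster and have a typical neighbourhood in the next cluster. By $\varepsilon$-regularity there are at least $(d-\varepsilon)|V_i|-k' \geq ck$ choices for each such step, since we use only a $(1-\gamma/2)$ fraction of each cluster. Only $o(k)$ connecting vertices are placed with possibly $O(1)$ choices. This yields at least $(ck)^{(1-o(1))k}/(2k)$ monochromatic copies.

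\textbf{Step 3 (extremal case).} In case (b) I would do an exact cleanup. Move misbehaving vertices so that the structure becomes parts $A$ and $B$ with $|A|+|B|=2k-1$, red inside $A$ and inside $B$ with few exceptions, and blue between them with few exceptions.
\begin{itemize}
\item If $|A|\ge k$, then $A$ spans a red graph of minimum degree at least $(1-\delta')|A|$. A Dirac-type Hamilton cycle count (Cuckler--Kahn, or directly greedy) gives at least $((1-\delta')k/e)^k$ red copies of $C_k$ on any $k$-subset of $A$.
\item Otherwise $|A|=|B|=k-1$ and there is one leftover vertex $v$. If $v$ has at least $\delta'' k$ red neighbours in $A$, then red cycles through $v$ and $k-1$ vertices of $A\cup\{v\}$ can be built greedily with about $(k-2)!$ choices. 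Symmetrically for $B$. If $v$ has few red neighbours in both $A$ and $B$, then $v$ sends many blue edges to both sides. Combining $v$ with the blue bipartite graph gives odd blue cycles through $v$, again with about $((k-1)/2)!^2$ embeddings.
\end{itemize}
Any sparse exceptional edges only reduce the choices at each step by $O(\delta k)$, which is harmless.

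\textbf{Main obstacle.} I expect Step~1 to be the hard part: proving that the non-extremal alternative really provides slack $\gamma>0$ in the size of the connected matching. Łuczak's argument is designed for $n=(2+\eta)k$, but here $n=2k-1$ exactly, with no additive room. I would first try a stability version of the connected-matching lemma for colorings of an almost complete $t$-vertex graph. Such a lemma would show that if no color has a non-bipartite connected matching of size $(1/2+\gamma)t$, then the reduced coloring is $\delta$-close to the two-clique pattern. Since $\gamma$ may be taken tiny relative to $\varepsilon$, this should let us avoid the exact threshold, which is then handled entirely by the combinatorial analysis of Step~3.
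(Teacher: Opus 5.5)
The paper gives no separate proof of this statement. It follows from Theorem~\ref{thm:main} with $\ell=0$, or from Theorem~\ref{thm:CFSW-lower}, and the proof of Theorem~\ref{thm:main} has the same overall shape as your plan.

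The genuine gap is the reduction in Step~1. Coloring each reduced edge by its \emph{majority} color discards exactly the information Step~3 relies on, and the dichotomy (a)/(b) is then false. Let $V(K_{2k-1})=A\cup B$ with $|A|=k$ and $|B|=k-1$. Color quasirandomly so that red has density $0.6$ inside $A$ and inside $B$, and density $0.4$ between them. Take a regularity partition whose clusters lie inside $A$ or inside $B$. The majority reduced graph is then exactly the two-clique pattern: red has two clique components with about $\frac{k}{2k-1}t$ and $\frac{k-1}{2k-1}t$ clusters, and blue is bipartite, so (a) fails. At the vertex level, however, every vertex has only about $0.6|A|$ red neighbours in $A$. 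So no cleanup yields parts with few exceptions, and the minimum-degree claim of Step~3 is false. This coloring has plenty of monochromatic $C_k$, but your argument never reaches them.

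To repair this, record every color whose density exceeds a small threshold $\tau$, as the paper does with $p_{ij}$ and $q_{ij}$. Only then does closeness to the split pattern in the reduced object transfer back to the vertex level; compare the conversion of $D_{p,q}(X)<\delta$ into $\lambda$-near-split in the proof of Proposition~\ref{prop:far-split}. The stability lemma must then be proved for this two-colored template, and that is precisely the step you leave open. The paper's replacement is Lemmas~\ref{lem:template-near-equality} and~\ref{lem:robust-core}. Instead of a connected matching, they produce a non-bipartite component of capacity greater than $1$ with a degree margin, and they handle star-like components by down-weighting or by switching to the other color.

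Two smaller points also need fixing.
\begin{enumerate}
\item In Step~2, the bound $(d-\varepsilon)|V_i|-k'$ is negative once $k'\approx(1-\gamma/2)|V_i|$ and $d\approx 1/2$. The right count is $(d-\varepsilon)|R|$ for the unused part $R$ of the next cluster. This needs $|R|\ge\varepsilon|V_i|$ and hence $\varepsilon\ll\gamma$, the opposite of your remark that $\gamma$ may be taken tiny relative to $\varepsilon$.
\item In Step~3, vertices moved into $A$ need not have red degree $(1-\delta')|A|$. You should keep high-degree cores, like $A'$ and $B'$ in Proposition~\ref{prop:near-split}. Then show that every other vertex either has at least $\delta''k$ blue neighbours in both cores, which gives many blue odd cycles through the blue bipartite graph, or is red to all but $\delta''k$ vertices of one core. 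Finally, count Hamilton cycles that route the few attached vertices through their red neighbourhoods in the core.
\end{enumerate}
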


K\'arolyi and Rosta~\cite{KarolyiRosta} subsequently obtained the first quantitative
superexponential lower bound. Their result gives an explicit positive
coefficient in front of the leading term $k\log k$ in the logarithm of
the multiplicity.

\begin{theorem}[K\'arolyi and Rosta \cite{KarolyiRosta}]
As $k$ tends to infinity through the odd integers,
\[
m(C_k)\geq
k^{(1/20-o(1))k}.
\]
\end{theorem}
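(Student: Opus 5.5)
The plan is a dichotomy between colorings that are close to the extremal coloring of $K_{2k-2}$ and colorings that are far from it. In the extremal coloring, $V(K_{2k-2})$ is split into two red cliques of order $k-1$ with all edges between them blue. In both cases the aim is a monochromatic $C_k$ that has a segment of linear length, say about $k/20$ vertices, which can be rerouted with roughly $\Omega(k)$ independent choices at each step. That gives at least $(ck)^{k/20}=k^{(1/20-o(1))k}$ distinct monochromatic copies. The constant $1/20$ is not meant to be optimal; it comes from the losses in the greedy steps below.

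\emph{Step 1 (degree pigeonhole).} Fix a coloring $\chi$ of $K_{2k-1}$. Every vertex has degree $2k-2$, so it has red degree or blue degree at least $k-1$. By pigeonhole, one color, say red, has at least $k$ vertices of red degree at least $k-1$. Let $R$ be the red graph and $S$ this set of vertices. Repeatedly deleting vertices of small red degree inside $S\cup N_R(S)$ should give a subgraph $R'$ on $\Theta(k)$ vertices with minimum degree $\delta|V(R')|$ for an absolute $\delta>0$. The alternative is that the blue graph is dense and well structured on a linear set, and then we argue symmetrically in blue.

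\emph{Step 2 (non-extremal case).} Suppose $R'$, or its blue analogue, is far from bipartite and is not contained in a clique-like block of order at most $k-1$. Then a Pósa-type rotation/extension argument, or the standard proof of Theorem~\ref{thm:Ramsey-cycles}, produces a red $C_k$. We build it so that a segment $P$ of length about $k/20$ lies inside a set $W$ with two properties: $|W|=\Theta(k)$, and every vertex of $W$ has $\Omega(k)$ red neighbours in $W$ outside the rest of the cycle. Then we re-embed $P$ greedily, one vertex at a time, keeping its endpoints fixed. At each step at least $ck$ choices remain, because we have used at most $k$ vertices and the degrees are linear. The last connecting edge is handled by reserving a small set of common neighbours, and this costs only a constant factor per step. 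Distinct choice sequences give distinct cycles, up to a factor $2k$ for rotations, which yields the bound.

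\emph{Step 3 (near-extremal case).} Suppose the coloring is close to the configuration of two red cliques $A,B$ of order about $k-1$ with blue edges between them. There is a $(2k-1)$-st vertex $v$, and $v$ has $k-1$ edges of a single colour. Suppose $v$ sends at least $\Omega(k)$ red edges into $A$. Then $A\cup\{v\}$ contains an almost complete red graph on at least $k$ vertices. An almost complete graph on $k$ vertices already contains $(k-1)!\,e^{-O(k)}$ Hamiltonian cycles, which is far more than needed. If instead $v$ sends many blue edges to both sides, it creates blue odd cycles through the nearly complete bipartite blue graph. The number of blue $C_k$'s using one path through $v$ is about $\bigl((k-1)!\bigr)^2/\text{poly}$, counted over alternating orderings. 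The imperfections (edges of the "wrong" color inside the near-extremal structure) are controlled by the closeness parameter and only cost $e^{O(k)}$ factors.

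The main obstacle is Step 2. There we must guarantee that the monochromatic $C_k$ found in a non-extremal coloring really has a linear-length segment inside a region of linear minimum degree, rather than being forced through sparse or bottleneck parts of the colour class. I would first try the cleanest version: pass to a maximal red subgraph $R'$ of minimum degree at least $\delta k$. If $R'$ has at least $k$ vertices and contains an odd cycle, use connectivity and expansion of $R'$ to close a cycle of length exactly $k$ through it, with parity adjusted via the odd cycle. If every such subgraph is bipartite or too small, the coloring is forced into the near-extremal case of Step 3 by a stability version of the Faudree--Schelp/Rosta argument. Making this stability statement quantitative with linear, rather than $o(k)$, error terms is where the constant $1/20$ would be lost.
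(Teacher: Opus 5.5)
\paragraph{Assessment.} The paper does not prove this statement. It only quotes it, and the bound is superseded by Theorem~\ref{thm:CFSW-lower} and by Theorem~\ref{thm:main} with $\ell=0$, since $(k-1)!/2=k^{(1-o(1))k}$. Your dichotomy, close to the split colouring versus far from it, matches the one used there. However, there is a genuine gap in your far case (Step~2): it is the whole difficulty, and you assert it rather than prove it.

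\paragraph{Why Step~2 fails as proposed.} Once you pass to $R'$ you keep only minimum degree $\delta|V(R')|$. Linear minimum degree, together with connectivity, an odd cycle and at least $k$ vertices, does not force a cycle of length exactly $k$.
\begin{itemize}
\item The complete split graph $K_d\vee\overline{K}_{2k-1-d}$ with $2\le d<k/2$ is $2$-connected, far from bipartite when $d$ is linear, and has minimum degree $d$. Yet every cycle uses at most $d$ vertices of the independent side, so it has length at most $2d<k$.
\item A path of cliques of order about $k/2$ joined by single edges fails in the same way, because every cycle lies in one block.
\end{itemize}
``Expansion'' does not follow from minimum degree. Nor does the closing step of your greedy re-embedding: ending a path of prescribed length at a fixed vertex $y$ requires reaching $N(y)$ with the right parity. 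That fails when $W$ is bipartite-like, or when it consists of two dense pieces joined sparsely. In colourings whose red graph looks like these examples, the cycles are blue. So the choice of colour and of the region carrying the cycles must come from a global two-colour stability argument. That is exactly what you defer to a ``stability version of the Faudree--Schelp/Rosta argument''.

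\paragraph{What supplies the missing ingredient in the paper.} For the stronger bound, the regularity machinery in Proposition~\ref{prop:far-split} does this work.
\begin{itemize}
\item Lemma~\ref{lem:template-near-equality} shows that if the reduced template is far from split, then in one colour the reduced graph has a connected non-bipartite component of capacity greater than one, i.e.\ spanning more than about $k$ vertices.
\item Lemma~\ref{lem:robust-core} turns this into weights $w$ with a degree margin and $\Gamma(w,c;J)\ge\gamma>0$.
\item Lemma~\ref{lem:euler-words} fixes the parity by traversing an odd cycle $F$ of clusters once.
\item Lemma~\ref{lem:lifting} embeds each cluster word through $\varepsilon$-regular pairs. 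Its buffer sets $A_{\mathrm{buf}},B_{\mathrm{buf}}$ do the exact closing that you attribute to ``reserving common neighbours''.
\end{itemize}
Without something of this strength, your sketch does not produce even one monochromatic $C_k$ with a re-routable linear segment.

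\paragraph{The exponent $1/20$.} Your explanation of the exponent is also off. Constant-factor losses per greedy step change only $c$ in $(ck)^{\epsilon k}$ and are absorbed into the $o(1)$. The exponent is the fraction $\epsilon$ of the cycle that can be freely re-routed, and nothing in the sketch shows $\epsilon\ge 1/20$.

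\paragraph{Smaller, repairable issues in Step~3.}
\begin{itemize}
\item The count $((k-1)!)^2/\mathrm{poly}$ is impossible, since $K_{2k-1}$ contains only $\binom{2k-1}{k}(k-1)!/2<4^k(k-1)!$ copies of $C_k$. The correct count, as in Lemma~\ref{lem:external-two-path}, is $(|S|-1)_{m-1}(|T|-1)_{m-1}$. For $|S|=|T|=k-1$ this is $(k-1)!\,2^{k}$ up to a polynomial factor, which is still ample.
\item Closeness to the extremal colouring gives only density information. You therefore need a degree-cleaning step, as in Proposition~\ref{prop:near-split}, before you can speak of one extra vertex $v$ and almost complete cliques of order exactly $k-1$.
\end{itemize}
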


In particular, the preceding theorem implies a lower bound of the form
$m(C_k)\geq k^{ck}$ for some absolute constant $c>0$. Conlon, Fox,
Sudakov and Wei~\cite{ConlonFoxSudakovWei} considerably strengthened this result. Their theorem
raises the coefficient of $k\log k$ in the exponent from $1/20$ to the
optimal value $1$.

\begin{theorem}[Conlon, Fox, Sudakov, and Wei
\cite{ConlonFoxSudakovWei}]\label{thm:CFSW-lower}
There is an absolute constant $c>0$ such that, for every odd integer
$k\geq 3$,
\[
m(C_k)\geq (ck)^k.
\]
\end{theorem}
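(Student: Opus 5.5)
We must prove $m(C_k)\ge (ck)^k$ for every odd $k\ge 3$, working on $n=r(C_k)=2k-1$ vertices (Theorem~\ref{thm:Ramsey-cycles}; small $k$ are absorbed into $c$). The plan is a supersaturation-by-extension argument. First we obtain a large monochromatic structure from the Ramsey threshold. Then we show that this structure contains a dense family of long monochromatic paths whose endpoints can be closed in many ways, giving roughly $k!/C^k$ cycles. Since $k!\ge (k/e)^k$, this suffices.

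\textbf{Step 1: a dense monochromatic piece.} Fix a coloring $\chi$ of $K_{2k-1}$. Some vertex has red degree, say, at least $k-1$.

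\begin{itemize}
\item Let $R$ be the red neighbourhood of that vertex, so $|R|\ge k-1$.
\item If the red graph on $R$ contains a red path on $k-2$ vertices, we can close it through the center to get a red $C_k$.
\item Otherwise the blue graph inside $R$ is dense. By the Erd\H{o}s--Gallai path theorem, the red graph on $R$ has fewer than $|R|(k-4)/2$ edges.
\end{itemize}

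We iterate this dichotomy in the spirit of the stability proofs of $r(C_k)=2k-1$. We aim to show that either (a) some colour class, say red, contains a subgraph $G$ on $m\ge k$ vertices with minimum degree at least $(1/2+\varepsilon)m$ and $G$ non-bipartite-like, or (b) the colouring is close to the extremal structure: two cliques $A,B$ of size about $k-1$ in one colour, joined by the other colour. In case (b), the extra vertex forced by $n=2k-1$ (one more than the $2k-2$ vertices of the extremal colouring) creates an odd cycle in a near-complete monochromatic graph on about $k$ vertices. Counting Hamiltonian-type cycles through it directly gives at least $(k-O(1))!/2^{O(k)}$ copies.

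\textbf{Step 2: counting in case (a).} In a graph with minimum degree at least $(1/2+\varepsilon)m$ on $m\ge k$ vertices, we count $C_k$'s by building paths greedily.

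\begin{itemize}
\item Choose a start vertex, then extend the path vertex by vertex. At step $i$ the current endpoint has at least $\delta-i$ unused neighbours, which is about $(1/2+\varepsilon)m-i$. This is positive while $i\le k-o(k)$, provided $m$ is sufficiently larger than $k$. Otherwise we restrict to a denser subgraph.
\item To reach length exactly $k$ with $m$ possibly close to $k$, we instead use the rotation-extension / absorbing count. Each choice made in the first $(1-\eta)k$ steps is from at least $\eta' k$ options. This gives at least $(\eta' k)^{(1-\eta)k}$ paths.
\item Each such path is completed to a $C_k$ using a fixed absorbing/connecting gadget, which costs at most a factor $k^{\eta k}$ in overcounting.
\item Odd length is ensured by the non-bipartiteness from Step 1 (a short odd cycle placed in the connector).
\end{itemize}

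Dividing by $2k$ for rotations and reflections yields $(ck)^k$.

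\textbf{Main obstacle.} The hard part is Step 1's dichotomy at exactly $n=2k-1$. There is no slack in vertex count, so a greedy dense subgraph may have only about $k$ vertices. Counting $C_k$'s spanning almost all of it then needs a Hamiltonicity-robust count, for instance via Dirac-type results counting Hamiltonian cycles (Cuckler--Kahn / S\'ark\"ozy--Selkow--Szemer\'edi: at least $(\delta/e)^m\cdot(1-o(1))^m$). I would try first to apply such a counting theorem inside the dense colour class. This reduces the problem to finding a $k$-vertex monochromatic subgraph with minimum degree at least $k/2$ in every colouring of $K_{2k-1}$, up to the near-extremal case. That case would be handled separately by explicit counting.
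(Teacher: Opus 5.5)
There is a genuine gap, and it is in Step~1, which carries the whole argument. The dichotomy you aim for is false: either (a) a monochromatic subgraph on $m\ge k$ vertices with minimum degree at least $(1/2+\varepsilon)m$, or (b) closeness to the split colouring. Consider a uniformly random colouring of $K_{2k-1}$. A fixed $m$-set with $m\ge k$ has red density at least $1/2+\varepsilon$ with probability at most $\exp(-2\varepsilon^2\binom m2)$, and there are fewer than $2^{2k}$ such sets. So with high probability neither colour contains the subgraph required in (a), yet the colouring is nowhere near split.

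Your fallback is a $k$-vertex monochromatic subgraph of minimum degree at least $k/2$, to feed into Cuckler--Kahn or S\'ark\"ozy--Selkow--Szemer\'edi. It sits exactly at the Dirac threshold, where those counting theorems have no slack and degrees in random-like colourings fluctuate by order $\sqrt k$. You give no argument for it. Proving it for every colouring that is not near-split would itself be a new Ramsey-stability theorem, i.e.\ essentially the whole problem. Two smaller points: your opening red-degree dichotomy produces only one cycle, which is irrelevant for counting. Also, a red path on $k-2$ vertices of $R$ closes through the centre to a $C_{k-1}$, not a $C_k$.

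The missing idea is that density above $1/2$ is not needed. Any fixed positive density suffices if it is carried by a connected non-bipartite structure of enough capacity. This is the route of Conlon, Fox, Sudakov and Wei. The paper quotes their theorem rather than reproving it; for large $k$ it also follows from Theorem~\ref{thm:main}, since $(k-1)!/2\ge (k/e)^k/(2k)$. The paper refines the far-from-split half of their argument in Lemmas~\ref{lem:robust-core}--\ref{lem:lifting}.

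After applying the regularity lemma, unless the reduced template is close to split, one colour contains a connected non-bipartite set of clusters. Its regular pairs have density bounded below by a constant, and its cluster loads are chosen so that no cluster is used beyond a $(1-\sigma)$-fraction of its size. An odd closed cluster walk of length $k$ with these loads is then embedded vertex by vertex. Since the number of clusters is bounded, each step has a constant fraction of the current cluster available, i.e.\ $\Omega(k)$ choices. This gives at least $(c'k)^k$ rooted cycles. In the random colouring this applies with density about $1/2$ on every pair, even though no monochromatic Dirac subgraph is available.

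Your case (b) is right in spirit, but it also needs the cleaning of Proposition~\ref{prop:near-split}. The two parts are only near-cliques, and the cycles come from specific defects, not simply from ``the extra vertex'':
\begin{itemize}
\item a wrong-coloured edge inside a part;
\item two disjoint short connectors between the parts;
\item an outside vertex joined to both parts in the cut colour.
\end{itemize}
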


The following natural construction establishes an upper bound of $m(C_k)$.
Two colorings are called \emph{equivalent} if one can be obtained from the other by relabeling the vertices and, possibly, exchanging the two colors. Let $\chi_0(a,b)$ denote the red/blue
edge-coloring of $K_{a+b}$ obtained from a vertex partition $A\cup B$,
where $|A|=a$ and $|B|=b$, by coloring all edges within $A$ and within
$B$ in one color and all edges between $A$ and $B$ in the other color.
Let $\chi_1(a,b)$ be obtained from $\chi_0(a,b)$ by recoloring exactly
one edge between $A$ and $B$ with the color used within the two parts. 
Different choices of this cross-edge give equivalent colorings. For $a,b\geq2$, exactly one of the two color graphs is connected
in $\chi_0(a,b)$, whereas both color graphs are connected
in $\chi_1(a,b)$. This property is preserved under vertex relabeling and exchanging the colors, so the two colorings are not equivalent.

Let $k\geq5$ be odd. The coloring $\chi_0(k-1,k-1)$ is a coloring of
$K_{r(C_k)-1}=K_{2k-2}$ containing no monochromatic copy of $C_k$.
On the other hand, both $\chi_0(k,k-1)$ and $\chi_1(k,k-1)$ are
colorings of $K_{r(C_k)}=K_{2k-1}$ containing exactly
\[
\frac{k!}{2k}=\frac{(k-1)!}{2}
\]
monochromatic copies of $C_k$. In each coloring, every monochromatic
copy of $C_k$ is a Hamilton cycle in the monochromatic clique on the
$k$ vertices of $A$. 
More generally, for every odd integer $k$ and all positive integers
$a$ and $b$, each of $\chi_0(a,b)$ and $\chi_1(a,b)$ contains exactly
\[
\left(\binom{a}{k}+\binom{b}{k}\right)\frac{(k-1)!}{2}
\]
monochromatic copies of $C_k$.

For $\chi_1(k,k-1)$, the exceptional cross-edge
is a bridge in its color and therefore lies in no monochromatic cycle.  
Consequently,
\begin{equation}\label{eq:upper-construction}
m(C_k)\leq \frac{(k-1)!}{2}
\end{equation}
 for every odd integer $k\geq 5$.
Motivated by this construction, Conlon, Fox, Sudakov and Wei~\cite{ConlonFoxSudakovWei,ConlonFoxSudakovWeiPaths} formulated
the following conjecture.

\begin{conjecture}[Conlon, Fox, Sudakov, and Wei
\cite{ConlonFoxSudakovWei,ConlonFoxSudakovWeiPaths}]\label{conj:CFSW-odd}
For every sufficiently large odd integer $k$,
\[
m(C_k)=\frac{(k-1)!}{2}.
\]
\end{conjecture}

By combining the regularity lemma with a stability argument, Conlon,
Fox, Sudakov and Wei~\cite{ConlonFoxSudakovWei} show that either there
is an odd cyclic sequence of clusters in the regularity partition such
that every consecutive pair is regular and has density bounded below
by a fixed positive constant in the same color, or the original
coloring is close to the split construction.
 In the first case, they
count cycles by following the cluster sequence; in the second, they
count cycles arising from local defects in the split structure. This
gives the lower bound $m(C_k)\geq(ck)^k$.

Their argument determines the correct factorial order up to an
exponential factor, but its counting estimates are not sufficiently
precise to establish the exact value in
Conjecture \ref{conj:CFSW-odd}. Indeed, retaining only a fixed proportion of the available
choices at each of $k$ steps causes an exponential loss. This loss can be absorbed into the constant $c$ in $(ck)^k$,
but a lower bound of this form alone does not establish the
precise value $(k-1)!/2$. The estimates in the near-split case
are likewise too coarse to determine the equality cases.


We follow the same broad division into near-split and
far-from-split colorings, but refine the counting in both cases, thereby retaining the information lost in the earlier estimates.

Our result determines the multiplicity not only at the Ramsey threshold, 
but also at every fixed number of vertices above it, 
and determines all colorings attaining equality.

\begin{theorem}\label{thm:main}
Fix a nonnegative integer $\ell$. There is an integer $k_0=k_0(\ell)$ such that the following holds for every odd integer $k\ge k_0$. 
Every red/blue edge-coloring $\chi$ of $K_{2k-1+\ell}$ satisfies
\[
\Nmono(C_k,\chi)\ge\left(\binom{\alpha}{k}+\binom {\beta}{k}\right)\frac{(k-1)!}{2},
\]
where $\alpha=\lceil (2k-1+\ell)/2\rceil$ and $\beta=\lfloor (2k-1+\ell)/2\rfloor$.
Equality holds if and only if $\chi$ is equivalent to $\chi_0(\alpha,\beta)$ or $\chi_1(\alpha,\beta)$. Consequently,
\[
M(C_k,2k-1+\ell)=\left(\binom{\alpha}{k}+\binom {\beta}{k}\right)\frac{(k-1)!}{2}.
\]
\end{theorem}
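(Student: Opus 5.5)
We follow the stability dichotomy described above, splitting into near-split and far-from-split colorings, but count exactly rather than up to exponential factors. Write $n=2k-1+\ell$ and $T=\bigl(\binom{\alpha}{k}+\binom{\beta}{k}\bigr)\frac{(k-1)!}{2}$. Since $\ell$ is fixed, $\binom{\alpha}{k}\le\binom{k+\ell}{k}=O_\ell(1)$, so $T=\Theta_\ell((k-1)!)$. It therefore suffices to show two things: a coloring far from split has at least $C\cdot (k-1)!$ monochromatic $C_k$ for a constant $C=C(\ell)$ exceeding the target, and a near-split coloring has at least $T$ copies, with equality only for $\chi_0,\chi_1$.

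\textbf{Far-from-split case.} Fix a small $\varepsilon>0$. Call $\chi$ $\varepsilon$-close to split if there is a partition $V=A\cup B$ such that all but $\varepsilon k^2$ edges inside the parts have one color and all but $\varepsilon k^2$ cross edges have the other. If $\chi$ is not $\varepsilon$-close, we run the regularity/stability argument of Conlon, Fox, Sudakov and Wei. This yields an odd closed walk of clusters, each consecutive pair regular with density at least $d$ in one color. Along such a walk (or a dense monochromatic nonbipartite substructure), the number of monochromatic $C_k$ is at least $(ck)^k/\,$poly. The point is to beat $(k-1)!\cdot O_\ell(1)\approx (k/e)^k$. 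The constant $c$ of Theorem~\ref{thm:CFSW-lower} may be smaller than $1/e$, so we must refine it. We would embed cycles greedily, choosing each next vertex among the typical neighbours, which number roughly $d\cdot|V_i|$.

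A better route is to exploit that $n\approx 2k$. Far from split, one color class must contain a very dense nonbipartite piece of size $(1+\delta)k$, or have high minimum degree on a large set. In such a piece, Hamiltonian-cycle counting (Dirac/Sárközy–Selkow-type bounds for $C_k$ in graphs with $\delta(G)\ge(1/2+\gamma)|G|$, giving $\ge (\delta(G)/e)^{k}(1-o(1))^k$ cycles on chosen $k$-sets, summed over many $k$-sets) produces $(1+\delta')^k (k-1)!$ copies. This counting comparison is the main obstacle: it must gain an exponential factor $(1+\delta')^k$ over $(k-1)!$ rather than lose one. We would try to obtain it by summing over the $\binom{(1+\delta)k}{k}$ choices of vertex set, which is exponentially large.

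\textbf{Near-split case.} Take a partition $A\cup B$ minimizing the number of bad edges, with red inside and blue across. By minimality, each vertex has at most about $\sqrt\varepsilon k$ bad incident edges, after moving a few exceptional vertices. We then analyse by cases.

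First, if $|A|\ge k$ and red is nearly complete on $A$, the red $C_k$ inside $A$ number about $\binom{|A|}{k}\frac{(k-1)!}{2}$ minus the losses from missing red edges. Each missing edge inside $A$ kills only a $\approx 2/|A|$ fraction of cycles. However, each bad blue edge inside a part, together with the nearly complete blue bipartite graph, creates odd blue cycles: a blue edge $uv$ in $A$ closes blue paths alternating between $A$ and $B$. This yields roughly $(k/2)!^2$ extra blue $C_k$ per bad edge, a quantity of order $(k-1)!/2^{k}$ up to polynomial factors. A cleaner comparison is needed here. Per bad edge the loss is about $T\cdot O(1/k)$, while the gain is of order $k!/(2^k\,\mathrm{poly})$. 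Since the loss is $O((k-2)!)$ per edge, which is much larger, this comparison fails. So we must instead show that a recoloring strictly reduces the count, via a local exchange argument: recoloring a bad edge inside $A$ back to red reduces the blue cycles through it and does not increase the red ones.

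Bad cross edges colored red similarly create red cycles using vertices of both parts. With two or more such edges, or a vertex incident to two of them, one gets red $C_k$ passing through $B$. Precisely one such edge is a bridge, which is $\chi_1$. Second, unbalanced parts $|A|>\alpha$ give strictly more copies by convexity of $\binom{x}{k}$. Third, the fine equality analysis is done for the exactly split structure with at most one deviation, recovering exactly $\chi_0(\alpha,\beta)$ and $\chi_1(\alpha,\beta)$.
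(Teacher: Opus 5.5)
Both halves of your plan contain a genuine gap. \textbf{Far from split.} You rightly note that an exponential gain over $(k-1)!$ is needed. (Incidentally, $\binom{\alpha}{k}$ is not $O_\ell(1)$: it equals $k+1$ for $\ell=2$ and is polynomial in $k$ in general, so a constant-factor gain would not suffice.) But you never supply a mechanism that delivers this gain. Greedy embedding along a fixed cluster walk keeps only a constant proportion of the choices at each of the $k$ steps, and that is exactly the exponential loss behind $(ck)^k$.

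Your substitute dichotomy is unproved and false in general. It asks for a colour containing either a nearly complete non-bipartite set of order $(1+\delta)k$, or a large set with Dirac-type minimum degree $(1/2+\gamma)$ times its order. A uniformly random colouring of $K_{2k-1+\ell}$ is far from split, yet with high probability every vertex set of linear order has density $1/2+o(1)$ in each colour. So neither structure exists there.

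The missing idea is the paper's weighted-template count. After regularity, Lemma~\ref{lem:robust-core} (built on Lemma~\ref{lem:template-near-equality}) shows that $\spl(p,q)\ge\delta$ forces, in one colour, a connected non-bipartite weighted core $J$. This core has a capacity margin $d_{w,J}(i)\le(1-\sigma)\mu_w(J)$ and a strictly positive entropy functional $\Gamma(w,c;J)\ge\gamma$. The count then proceeds in three steps:
\begin{enumerate}
\item prescribe transition counts $n_{ij}=ky_{ij}+O(1)$ with $y_{ij}\propto c_ic_jw_{ij}$;
\item count the closed cluster words having exactly these counts via the BEST theorem (Lemma~\ref{lem:euler-words});
\item lift each word through the regular pairs at a cost of only $e^{-\nu k}$ (Lemma~\ref{lem:lifting}).
\end{enumerate}
The ideal weight is $k!\,e^{(\gamma-o(1))k}$. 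This gives at least $k!\,e^{(3\gamma/4-o(1))k}$ rooted directed monochromatic cycles, i.e.\ at least $H_me^{gk}$ copies. It is the route entropy $H(y)-H(x)$ that pays for the density factors $w_{ij}<1$. For the random colouring $\Gamma\approx\log 2$, in line with the true count of roughly $2^kH_m$. Nothing in your sketch plays this role.

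\textbf{Near split.} Your count of blue $C_k$ through a wrong-coloured edge $uv$ inside $A$ is off by an exponential factor. The quantity $(k/2)!^2\approx(m!)^2$ counts orderings of \emph{fixed} $m$-sets. The alternating path actually chooses its $m-1$ further vertices of $A$ and its $m$ vertices of $B$ from sets of order about $2m$, giving $(|A|-2)_{m-1}(s)_m$ cycles (Lemma~\ref{lem:wrong-edge}). For $|A|=s=2m+1$ this equals $\frac{1}{2m}\binom{2m+1}{m}(2m)!\approx 4^m m^{-3/2}(2m)!$, i.e.\ about $2^k(k-1)!/\mathrm{poly}(k)$. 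That exceeds $T\le C_\ell m^\ell H_m$ outright, and Lemma~\ref{lem:falling-comparison} shows the excess survives the $(1-O(\eta))$ losses from cleaning. So the direct comparison you abandoned is exactly what works: after cleaning, a single wrong-coloured internal edge already contradicts $\Nmono(C_k,\chi)\le T$, with no loss--gain bookkeeping.

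The exchange argument you put in its place rests on a false claim. Recolouring a blue edge of $A$ red removes the blue cycles through it but creates red ones: as many as $(|A|-2)_{k-2}$ when the rest of $A$ is a red clique of order $|A|\ge k$.

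The same ``one defect already yields more than $T$ cycles'' counting is also needed for two further configurations, which your plan does not address:
\begin{enumerate}
\item two vertex-disjoint red connectors of length at most two between the parts (Lemma~\ref{lem:two-connectors});
\item an exceptional vertex with blue neighbours on both sides (Lemma~\ref{lem:external-two-path}).
\end{enumerate}
Only after these are excluded can the exceptional vertices be distributed so that the whole vertex set splits into two red cliques. Exhibiting merely one extra red $C_k$ through both parts suffices only at the final equality step. At that point both parts are already red cliques of orders $\alpha$ and $\beta$ whose internal cycles number exactly $T$.
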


The case $\ell=0$ confirms Conjecture~\ref{conj:CFSW-odd} and classifies all threshold extremal colorings.


At the end of this section, we give an overview of the proof. We divide the colorings according to
whether they are close to a split coloring. In the near-split case, a
cleaning argument produces two large sets that almost have the split
pattern. 
Falling-factorial estimates show that a wrong-colored internal edge, two vertex-disjoint short paths in the internal color joining the parts, or an external vertex with neighbors in both parts in the color of the cut produces more cycles than the bound in Theorem~\ref{thm:main}. 
Otherwise, all vertices can be divided into two
monochromatic cliques. A balancing argument gives the required lower
bound; equality forces the two parts to be as equal as possible and
allows at most one cross-edge in the clique color.

In the far-from-split case, 
we first apply the colored
regularity lemma in the form used in~\cite{ConlonFoxSudakovWei}. This
divides the vertices into a bounded number of almost equal clusters and
records the red and blue densities between every regular pair.
That is, the colored regularity lemma replaces the coloring by a bounded weighted graph whose vertices represent clusters.
A structural lemma finds, in one color, a connected collection of
clusters containing an odd cycle and supporting many closed routes
without using any cluster too often. We round the required numbers of
moves between clusters to form a directed multigraph with exactly $k$
arcs and equal in-degree and out-degree at every vertex. The BEST
theorem counts its Euler tours, and regularity turns the resulting
cluster orders into simple monochromatic copies of $C_k$. This gives an
exponential gain over $(k-1)!/2$, whereas the bound in
Theorem~\ref{thm:main} is at most a polynomial factor larger because
$\ell$ is fixed. Hence the far-from-split case always gives a strict
inequality.

\section{Cycle-counting estimates}\label{Cycle-counting estimates}
Throughout the paper, $\ell$ is a fixed nonnegative integer, and we write
$$k=2m+1,~N=2k-1+\ell,~H_m=\frac{(2m)!}{2}=\frac{(k-1)!}{2},$$ 
and 
$$F_\ell(k)=\left(\binom{\lceil N/2\rceil}{k}+\binom{\lfloor N/2\rfloor}{k}\right)H_m.$$
Clearly, $F_0(k)=H_m$. The following standard balancing inequality, which is a
two-variable consequence of the Hardy--Littlewood--Pólya
majorization principle \cite{HardyLittlewoodPolya1929}, shows that
$F_\ell(k)$ is the minimum number of copies of $C_k$ lying entirely
within two disjoint monochromatic cliques of the same color whose
orders sum to $N$.
\begin{lemma}\label{lem:balance}
Let $k\ge2$ and let $p,q$ be nonnegative integers with $p+q=N\geq 2k-1$. 
If $\alpha=\lceil N/2\rceil$ and $\beta=\lfloor N/2\rfloor$, 
then $\binom pk+\binom qk\ge\binom{\alpha}{k}+\binom {\beta}{k}$. 
Equality holds if and only if $\{p,q\}=\{\alpha,\beta\}$.
\end{lemma}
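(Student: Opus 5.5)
Without loss of generality take $p\ge q$, so that $p\ge\alpha\ge\beta\ge q$. The plan is to walk from $\{p,q\}$ to $\{\alpha,\beta\}$ by moving one vertex at a time from the larger part to the smaller, and to show that no step increases $\binom pk+\binom qk$ and that the first step strictly decreases it. The basic tool is Pascal's rule:
\[
\binom{p}{k}+\binom{q}{k}-\binom{p-1}{k}-\binom{q+1}{k}=\binom{p-1}{k-1}-\binom{q}{k-1}.
\]
Whenever $p-1\ge q$, the right-hand side is nonnegative, because $x\mapsto\binom{x}{k-1}$ is nondecreasing on the nonnegative integers.

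The iteration proceeds as follows. Each move replaces $(p,q)$ by $(p-1,q+1)$ and is applied while $p-q\ge 2$. It keeps the sum equal to $N$ and stops exactly at $(\alpha,\beta)$. Summing the one-step inequalities gives $\binom pk+\binom qk\ge\binom{\alpha}{k}+\binom{\beta}{k}$.

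The equality case is the only real obstacle, since weak monotonicity alone does not give strictness. Suppose $\{p,q\}\ne\{\alpha,\beta\}$. Then $p-q\ge 2$, so $p\ge\alpha+1$ and at least one move is made. The total difference is at least the first-step difference $\binom{p-1}{k-1}-\binom{q}{k-1}$, and we need this to be positive, i.e.\ $\binom{p-1}{k-1}>\binom{q}{k-1}$ with $p-1>q$.

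Strict increase of $x\mapsto\binom{x}{k-1}$ between $q$ and $p-1$ holds as soon as $p-1\ge k-1$, because $\binom{x}{k-1}$ vanishes for $x<k-1$ and is strictly increasing for $x\ge k-2$. Since $N\ge 2k-1$, we have $\alpha\ge k$, so $p-1\ge\alpha\ge k>k-1$. Hence $\binom{p-1}{k-1}\ge\binom{k}{k-1}=k>0$. If $q<k-1$, then $\binom{q}{k-1}=0$ and the first difference is positive. Otherwise $k-1\le q<p-1$, and strict monotonicity on $[k-1,\infty)$ again gives a positive difference.

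Thus equality forces $\{p,q\}=\{\alpha,\beta\}$. The converse direction of the equality statement is trivial.
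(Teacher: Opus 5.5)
Your proof is correct. It is also more self-contained than the paper, which gives no argument: it only remarks that the lemma is a two-variable case of the Hardy--Littlewood--P\'olya majorization principle. In other words, the paper applies Karamata's inequality to the convex function $x\mapsto\binom{x}{k}$, using that $(p,q)$ majorizes $(\alpha,\beta)$.

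Your Pascal-rule telescoping is the elementary content behind that citation. The nonnegativity of $\binom{p-1}{k-1}-\binom{q}{k-1}$ is exactly the discrete convexity of $\binom{x}{k}$, and your walk toward $(\alpha,\beta)$ is the two-point smoothing step.

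What your version buys is the equality clause, which the bare citation does not deliver. The function $\binom{x}{k}$ is not strictly convex on the integers: its second difference $\binom{x}{k-2}$ vanishes for $x<k-2$, and the function is identically zero on $\{0,\dots,k-1\}$. Strictness therefore genuinely needs your observation that $p-1\ge\alpha\ge k$, and that is where the hypothesis $N\ge 2k-1$ enters. The majorization citation, in turn, buys brevity and generality (any number of parts, any convex function).
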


For a graph $G$ and $U\subseteq V(G)$, let $G[U]$ denote the
subgraph of $G$ induced by $U$. All logarithms are natural.
For a real number $x$ and a nonnegative integer $s$, write $(x)_s=x(x-1)\cdots(x-s+1)$, with $(x)_0=1$. Stirling's formula is
\[
\log(n!) = n\log n - n + \frac{1}{2}\log(2\pi n) + O\left(\frac{1}{n}\right).
\]

We have the following estimate for $F_\ell(k)$.

\begin{lemma}\label{lem:estimate-Fl(k)}
There is a constant \(C_\ell>0\), depending only on \(\ell\),
such that
\begin{equation*}
  F_\ell(k)\le C_\ell m^\ell H_m
\end{equation*}
for all sufficiently large \(k\).
\end{lemma}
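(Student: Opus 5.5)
We need $F_\ell(k)=\bigl(\binom{\alpha}{k}+\binom{\beta}{k}\bigr)H_m \le C_\ell m^\ell H_m$, so after cancelling $H_m$ it suffices to show
\[
\binom{\alpha}{k}+\binom{\beta}{k}\le C_\ell m^\ell .
\]
Since $\beta\le\alpha$ and $\binom{x}{k}$ is nondecreasing in $x$, the left side is at most $2\binom{\alpha}{k}$. Here $\alpha=\lceil (2k-1+\ell)/2\rceil = k+\lceil(\ell-1)/2\rceil$. Put $t=\lceil(\ell-1)/2\rceil$, a nonnegative integer depending only on $\ell$ (for $\ell=0$ it is $0$). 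Then
\[
\binom{\alpha}{k}=\binom{k+t}{t}\le \frac{(k+t)^t}{t!}.
\]

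With $k=2m+1$ and $m\ge1$ we have $k+t\le (3+t)m$. Hence $\binom{\alpha}{k}\le (3+t)^t m^t/t!$. Since $t\le \ell/2\le\ell$ and $m\ge1$, we get $m^t\le m^\ell$. Therefore we may take
\[
C_\ell=\frac{2(3+t)^t}{t!},
\]
and in fact this holds for all $k\ge3$, not only sufficiently large $k$. Stirling's formula is not needed, because the ratio $F_\ell(k)/H_m$ is just a ratio of binomial coefficients.

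There is essentially no obstacle here. The only point needing care is computing $\alpha-k$ correctly for both parities of $\ell$. When $\ell$ is even, $N$ is odd and $\alpha=k+\ell/2$. When $\ell$ is odd, $N$ is even and $\alpha=\beta=k+(\ell-1)/2$. In both cases $\alpha-k\le\ell/2$, which gives the bound even in the stronger form $O(m^{\lceil \ell/2\rceil})$.
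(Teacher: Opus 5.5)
Your proof is correct and follows essentially the same route as the paper: bound the sum by $2\binom{\alpha}{k}$ using monotonicity, rewrite this as $\binom{k+t}{t}\le (k+t)^t/t!$, and bound $k+t$ linearly in $m$. The only difference is that the paper uses the cruder bound $\alpha\le k+\ell$ together with $k+\ell\le 3m$ for $m\ge\ell+1$. Your exact value $\alpha-k=\lceil(\ell-1)/2\rceil=\lfloor \ell/2\rfloor$ instead gives the sharper order $m^{\lfloor \ell/2\rfloor}$, valid for all $k\ge 3$; this is even slightly better than the $m^{\lceil \ell/2\rceil}$ you state.
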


\begin{proof}
Since \(N=2k-1+\ell\), both \(\lceil N/2\rceil\) and
\(\lfloor N/2\rfloor\) are at most \(k+\ell\).
By monotonicity in the upper argument,
\[
  \frac{F_\ell(k)}{H_m}
  \le 2\binom{k+\ell}{k}
  =2\binom{k+\ell}{\ell}
  \le \frac{2(k+\ell)^\ell}{\ell!}.
\]
For \(m\ge\ell+1\), we have \(k+\ell=2m+1+\ell\le3m\).
Consequently, the lemma holds with
\(C_\ell=2\cdot3^\ell/\ell!\).
\end{proof}

The cycle constructions used below produce products of two falling
factorials. In each product, the upper arguments are close to $2m$,
the lengths are close to $m$, and the loss caused by prescribed
endpoints or previously used vertices is bounded by a constant. The
next lemma shows that every such product exceeds $H_m$ times any fixed
polynomial in $m$. In view of Lemma~\ref{lem:estimate-Fl(k)}, taking
$C=C_\ell$ and $d=\ell$ will therefore allow us to compare all these
cycle counts directly with $F_\ell(k)$.
\begin{lemma}\label{lem:falling-comparison}
There is an absolute constant $\zeta_0>0$ with the following property.
Fix nonnegative integers $a_1,a_2,b_1,b_2,d$ and a positive constant $C$. If $m$ is sufficiently
large and $s,t\geq(2-\zeta_0)m$, then
\[
(s-a_1)_{m-b_1}(t-a_2)_{m-b_2}>Cm^d H_m.
\]
\end{lemma}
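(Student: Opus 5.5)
We reduce everything to a comparison of logarithms via Stirling's formula. Writing $(x)_s = x!/(x-s)!$ for integers, and using monotonicity of the falling factorial in its upper argument, it suffices to treat the worst case $s=t=\lceil(2-\zeta_0)m\rceil$. That is, we show
\[
\log\bigl((s-a_1)_{m-b_1}\bigr)+\log\bigl((s-a_2)_{m-b_2}\bigr)-\log H_m-d\log m-\log C\to\infty .
\]
We have $(x)_j\ge (x-j+1)^j$ and $(x)_j\le x^j$. The constants $a_i,b_i$ change each falling factorial by at most a factor $m^{O(1)}$ in either direction: removing $b_i$ factors each of size at most $2m$, and shifting the upper argument by $a_i$ changes the product by a ratio bounded by $(1+O(1/m))^{m}\cdot m^{O(1)}$. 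So all fixed parameters together, including $C m^d$, contribute only $O(\log m)$ to the logarithm. We may therefore compare $(s)_m^2$ with $H_m=(2m)!/2$, up to polynomial factors in $m$.

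The key computation is the exponential-order comparison. Write $s=\gamma m$ with $\gamma=2-\zeta_0$, and set $f(\gamma)=\gamma\log\gamma-(\gamma-1)\log(\gamma-1)$. Stirling gives
\[
\log (s)_m=\log s!-\log (s-m)! = m\log m + m\bigl(f(\gamma)-1\bigr)+O(\log m),
\]
and
\[
\log (2m)! = 2m\log m + m(2\log 2-2)+O(\log m).
\]
Doubling the first expression, the $2m\log m$ and $-2m$ terms cancel. The difference is therefore $m\bigl(2f(\gamma)-2\log 2\bigr)+O(\log m)$. At $\gamma=2$ we get $f(2)=2\log 2$, so the gap is $2m\log 2$, an exponential gain of $4^m$. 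This matches the intuition that $(2m)_m^2$ exceeds $(2m)!$ by roughly $\binom{2m}{m}\approx 4^m$. Since $f$ is continuous, we choose an absolute $\zeta_0>0$ (for example $\zeta_0=1/2$, after checking $2f(3/2)>2\log 2$ numerically) with $2f(\gamma)-2\log 2\ge c_0>0$ for all $\gamma\in[2-\zeta_0,2]$.

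With this choice, the logarithmic difference is at least $c_0m-O(\log m)$. This is positive for large $m$, giving the strict inequality. Note also that $(x)_j$ is increasing in $x$ for $x\ge j-1$, which justifies reducing to $s,t$ at the lower bound.

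The main obstacle is bookkeeping: keeping the $O(\log m)$ error terms uniform in $s,t$ with absolute constants. Handling large $s$ by monotonicity avoids any dependence on how large $s,t$ are.
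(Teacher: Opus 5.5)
Your proposal is correct and follows essentially the same route as the paper: Stirling gives $\log(x)_m=m\log m+m\bigl(f(\gamma)-1\bigr)+O(\log m)$ against $\log H_m=2m\log m+(2\log2-2)m+O(\log m)$, which yields a gain of $2m\log2$ at $\gamma=2$. You then fix $\zeta_0$ by continuity (your explicit $\zeta_0=1/2$ works, since $f$ is increasing) and reduce to the boundary case by monotonicity, with the fixed shifts $a_i,b_i$ costing only $O(\log m)$. One small adjustment: reduce to $\lfloor(2-\zeta_0)m\rfloor$ rather than the ceiling, because the lemma is later applied with non-integer upper arguments such as $(1-14\eta)k$.
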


\begin{proof}
Fix $0\leq\zeta<1$. Suppose that $x_m$ and $q_m$ are integers satisfying
$x_m=(2-\zeta)m+O(1)$ and $q_m=m+O(1)$. Then
$x_m-q_m=(1-\zeta)m+O(1)$. Since
$(x_m)_{q_m}=x_m!/(x_m-q_m)!$,
Stirling's formula gives
\begin{align*}
\log (x_m)_{q_m}
&=\log(x_m)!-\log(x_m-q_m)!\\
&=x_m\log x_m-(x_m-q_m)\log(x_m-q_m)-q_m
  +O(\log m)\\
&=m\log m+m\Phi(\zeta)+O(\log m),
\end{align*}
where $\Phi(\zeta)
=(2-\zeta)\log(2-\zeta)
 -(1-\zeta)\log(1-\zeta)-1$.
On the other hand, since $H_m=(2m)!/2$, Stirling's formula gives
\begin{align*}
\log H_m
&=\log(2m)!-\log2\\
&=2m\log(2m)-2m+O(\log m)\\
&=2m\log m+(2\log2-2)m+O(\log m).
\end{align*}
At $\zeta=0$, we have $\Phi(0)=2\log2-1$
and hence $2\Phi(0)-(2\log2-2)=2\log2>0$.
By continuity, we may choose an absolute constant
$\zeta_0\in(0,1)$ such that $2\Phi(\zeta_0)-2\log2+2>0$.

For fixed $a_1,a_2,b_1,b_2$ and all sufficiently large $m$, every
factor in the falling factorials below is positive. Since $(x)_r$ is
increasing in $x$ whenever all its factors are positive, the assumptions
$s,t\geq(2-\zeta_0)m$ imply
\[
(s-a_1)_{m-b_1}(t-a_2)_{m-b_2}
\geq
\bigl(\lfloor(2-\zeta_0)m\rfloor-a_1\bigr)_{m-b_1}
\bigl(\lfloor(2-\zeta_0)m\rfloor-a_2\bigr)_{m-b_2}.
\]
Applying the preceding asymptotic estimate with
$x_m=\lfloor(2-\zeta_0)m\rfloor-a_i$ and $q_m=m-b_i$ for $i\in\{1,2\}$ gives
\begin{align*}
\log\bigl((s-a_1)_{m-b_1}(t-a_2)_{m-b_2}\bigr)-\log H_m
&\geq
\left(2\Phi(\zeta_0)-2\log2+2\right)m+O(\log m)\\
&>\log (Cm^d)
\end{align*}
for sufficiently large $m$. 
Exponentiating proves $(s-a_1)_{m-b_1}(t-a_2)_{m-b_2}>Cm^d H_m$.
\end{proof}
All cycles counted in this section are unrooted and unoriented.
The following three lemmas correspond to the three mechanisms in the near-split proof, where a single local defect immediately creates many odd cycles.

We first consider a wrong-colored edge inside one of the two parts.
If the other part supplies many common neighbors for pairs of vertices,
then this edge can be extended to many odd cycles. The
following lemma gives the required count.
\begin{lemma}[Conlon, Fox, Sudakov and Wei
\cite{ConlonFoxSudakovWei}]\label{lem:wrong-edge}
Let $S$ and $T$ be disjoint vertex sets in a graph of one fixed color. Suppose that $S$ contains an edge $uu'$ and every two vertices of $S$ have at least $s$ common neighbors in $T$, where $s\geq m$. Then there are at least
\[
(|S|-2)_{m-1}(s)_m
\]
cycles $C_{2m+1}$ of that color containing $uu'$.
\end{lemma}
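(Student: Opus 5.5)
Without loss of generality, write the edge as $uu'$ with $u,u'\in S$. We build a cycle $C_{2m+1}$ that alternates between $S$ and $T$ except at the edge $uu'$:
\[
u'=x_0,\ y_1,\ x_1,\ y_2,\ x_2,\ \dots,\ x_{m-1},\ y_m,\ x_m=u,
\]
closed by the edge $uu'$. Here $x_1,\dots,x_{m-1}\in S\setminus\{u,u'\}$ are distinct and $y_1,\dots,y_m\in T$ are distinct. The cycle has $m+1$ vertices in $S$, namely $u',x_1,\dots,x_{m-1},u$, and $m$ vertices in $T$, so it has $2m+1$ vertices. Its edges are the $2m$ edges $x_{i-1}y_i$, $y_ix_i$ for $1\le i\le m$, plus the edge $uu'$, giving $2m+1$ edges. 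So this is a cycle of the required length, provided each $y_i$ is a common neighbor of $x_{i-1}$ and $x_i$ in the fixed color.

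\textbf{Counting.} First choose the ordered sequence $x_1,\dots,x_{m-1}$ of distinct vertices of $S\setminus\{u,u'\}$. There are exactly $(|S|-2)_{m-1}$ ways to do this. If $|S|-2<m-1$ this number is $0$ and the bound is trivial.

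Next choose $y_1,\dots,y_m$ greedily in order. The vertex $y_i$ must be a common neighbor of $x_{i-1}$ and $x_i$ in $T$. There are at least $s$ such vertices by hypothesis, and at most $i-1$ of them have already been used. So there are at least $s-(i-1)$ choices for $y_i$. The product of these lower bounds is $(s)_m$, and this is positive because $s\ge m$.

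Multiplying the two counts gives at least $(|S|-2)_{m-1}(s)_m$ choices of the sequence $(x_1,\dots,x_{m-1},y_1,\dots,y_m)$.

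\textbf{Injectivity.} It remains to check that distinct choices of the sequence give distinct cycles. Fix one of the resulting cycles $C$. Traverse $C$ starting at $u'$, in the direction away from $u$ (that is, not along the edge $uu'$), until reaching $u$. This traversal reads off exactly $x_0,y_1,x_1,\dots,y_m,x_m$. Hence the sequence is determined by the unrooted, unoriented cycle $C$ together with the fixed labeled edge $uu'$, and the map from sequences to cycles is injective.

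This injectivity step is the only delicate point. Because $u$ and $u'$ play fixed, distinguished roles, there is no factor of $2$ from reversing the orientation or from choosing a starting point.

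Every cycle produced contains the edge $uu'$, which gives the stated bound.
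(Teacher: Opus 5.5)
Your argument is correct and complete. You choose the ordered $x_1,\dots,x_{m-1}$ in $(|S|-2)_{m-1}$ ways, pick each $y_i$ greedily among at least $s-(i-1)$ unused common neighbours of $x_{i-1},x_i$, and recover the sequence by reading the cycle from $u'$ away from $u$, which gives exactly $(|S|-2)_{m-1}(s)_m$. The paper does not reprove this lemma, since it is quoted from Conlon, Fox, Sudakov and Wei. Your count is, however, the same alternating-path construction the paper uses to prove Lemma~\ref{lem:external-two-path}, with the edge $uu'$ in place of the path $a-z-b$.
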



After wrong-colored internal edges have been eliminated, the two main parts become monochromatic cliques. Two vertex-disjoint short paths of the same color between these cliques can then be joined by paths inside the cliques to form an odd cycle. The next lemma counts the cycles obtained in this way.
\begin{lemma}[Conlon, Fox, Sudakov and Wei
\cite{ConlonFoxSudakovWei}]\label{lem:two-connectors}
Let $m\geq3$, and let $S$ and $T$ be disjoint cliques in one fixed color with $|S|,|T|\geq m+1$. Suppose that there are two vertex-disjoint paths $P_1,P_2$ of that color, each of length one or two, with one end in $S$, the other in $T$, and all internal vertices outside $S\cup T$. Then that color contains at least
\begin{equation*}
(|S|-2)_{m-3}(|T|-2)_{m-2}
\end{equation*}
cycles $C_{2m+1}$.
\end{lemma}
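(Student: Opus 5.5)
We count explicitly the cycles obtained by closing the two connectors with paths inside the cliques. Write $P_1$ with ends $s_1\in S$ and $t_1\in T$, and $P_2$ with ends $s_2\in S$ and $t_2\in T$. Since $P_1,P_2$ are vertex-disjoint, $s_1\ne s_2$ and $t_1\ne t_2$. Let $L_1,L_2\in\{1,2\}$ be their lengths and $L=L_1+L_2\in\{2,3,4\}$. Together the two paths have exactly $L-2$ internal vertices, and all of them lie outside $S\cup T$.

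A cycle of the given color is formed from the following pieces:
\begin{itemize}
\item a path $Q_S$ inside $S$ from $s_1$ to $s_2$ on $a$ vertices;
\item the path $P_2$;
\item a path $Q_T$ inside $T$ from $t_2$ to $t_1$ on $b$ vertices;
\item the path $P_1$.
\end{itemize}
The internal vertices of the $P_i$ avoid $S\cup T$, and $S\cap T=\emptyset$, so these pieces meet only at their shared endpoints. Hence they form a cycle on $a+b+L-2$ vertices, and all its edges have the fixed color because $S$ and $T$ are cliques of that color. We need $a+b+L-2=2m+1$.

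I would fix $b=m$ and $a=m+3-L\in\{m-1,m,m+1\}$. Then $a\ge m-1\ge2$ since $m\ge3$, and $a\le m+1\le|S|$. Likewise $2\le b=m\le |T|$.

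The number of choices of $Q_S$ is $(|S|-2)_{a-2}$: we choose an ordered sequence of $a-2$ distinct interior vertices from $S\setminus\{s_1,s_2\}$. Similarly, the number of choices of $Q_T$ is $(|T|-2)_{m-2}$.

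Next compare $(|S|-2)_{a-2}$ with $(|S|-2)_{m-3}$. Since $a-2\ge m-3$, the first falling factorial equals the second times the extra factors $|S|-2-j$ for $m-3\le j\le a-3$. The smallest of these extra factors is $|S|-a+1\ge1$, using $a\le|S|$. Therefore
\[
(|S|-2)_{a-2}\ge(|S|-2)_{m-3}.
\]

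Finally, distinct pairs $(Q_S,Q_T)$ give distinct cycles. Indeed, the cycle together with the fixed paths $P_1,P_2$ determines $Q_S$ as the $S$-segment between $s_1$ and $s_2$, and $Q_T$ likewise. Multiplying the two counts gives at least $(|S|-2)_{m-3}(|T|-2)_{m-2}$ cycles $C_{2m+1}$.

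The only delicate point is the choice of the split $(a,b)$. If one instead fixed $a=m-1$, then in the case $L=2$ we would need $b=m+2$, which can exceed $|T|$ when $|T|=m+1$. Putting the slack on the $S$ side avoids this, because $a\le m+1\le |S|$ always holds.
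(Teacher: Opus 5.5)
Your proof is correct. The paper gives no proof of this lemma to compare against: it quotes the result from Conlon, Fox, Sudakov and Wei. Your argument is the natural direct count, in the same style as the paper's own proof of Lemma~\ref{lem:external-two-path}. You fix the two connectors, count ordered sequences of interior vertices inside the monochromatic cliques, and recover both paths from the unrooted cycle as its edges inside $S$ and inside $T$.

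The stated bound is exactly your count in the worst case $L=4$, where $a=m-1$ and $b=m$. For $L\in\{2,3\}$, your choice $a=m+3-L\le m+1\le|S|$ keeps the path inside $S$ and only multiplies the count by factors that are at least $1$. So you in fact obtain the slightly stronger bound $(|S|-2)_{m+1-L}(|T|-2)_{m-2}$. Your remark that putting all the slack on the $T$ side would fail when $L=2$ and $|T|=m+1$ is accurate. This is exactly where the hypothesis $|S|\ge m+1$ is used.
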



A vertex outside the two cleaned parts may also create many cycles. If
it forms a monochromatic path of length two between the parts, then this
path can be completed by an alternating path across the monochromatic
cut. The following lemma records the resulting count.
\begin{lemma}\label{lem:external-two-path}
Let every edge between disjoint sets $S$ and $T$ have one fixed color. If $a-z-b$ is a path of that color with $a\in S$, $b\in T$ and $z\notin S\cup T$, then at least
\[
(|S|-1)_{m-1}(|T|-1)_{m-1}
\]
cycles $C_{2m+1}$ of that color contain this path.
\end{lemma}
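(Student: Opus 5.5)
We will count cycles of length $2m+1$ that contain the path $a-z-b$ by completing it with an alternating path across the cut. Because every edge between $S$ and $T$ has the fixed color, any path that alternates between $S$ and $T$ and uses distinct vertices is monochromatic in that color. A cycle $C_{2m+1}$ through $a-z-b$ uses the two edges $az$ and $zb$. The remaining $2m-1$ edges must form a path from $b$ back to $a$ whose internal vertices avoid $z$. Since $b\in T$ and $a\in S$, an alternating $S$--$T$ path from $b$ to $a$ has odd length. This matches the required length $2m-1$, so no parity obstruction arises. Such a path has $2m$ vertices, namely $m$ in $T$ and $m$ in $S$, alternating $b=t_1,s_1,t_2,s_2,\dots,t_m,s_m=a$.

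The plan is therefore as follows. Choose the sequence $t_2,\dots,t_m$ as an ordered list of distinct vertices from $T\setminus\{b\}$, which gives $(|T|-1)_{m-1}$ choices. Independently, choose $s_1,\dots,s_{m-1}$ as an ordered list of distinct vertices from $S\setminus\{a\}$, which gives $(|S|-1)_{m-1}$ choices. Every edge $t_is_i$ and $s_it_{i+1}$ is a cross edge, hence of the fixed color. Since $z\notin S\cup T$ and $S\cap T=\emptyset$, all $2m+1$ vertices $z,t_1,\dots,t_m,s_1,\dots,s_m$ are distinct. So each choice yields a cycle $z\,b\,s_1\,t_2\,s_2\cdots t_m\,a\,z$ of length $2m+1$ in that color containing $a-z-b$. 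If $|S|-1<m-1$ or $|T|-1<m-1$, the falling factorial vanishes and the statement is trivial, so we may assume enough vertices exist.

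The only point requiring care is injectivity: distinct choices must give distinct unrooted, unoriented cycles. This holds because in any such cycle $z$ is the unique vertex outside $S\cup T$. The cycle therefore has a canonical starting point $z$, and a canonical direction given by traversing $z\to b$ first. Reading the cycle from $z$ in that direction recovers the ordered lists $(s_1,\dots,s_{m-1})$ and $(t_2,\dots,t_m)$ uniquely. Different choices thus give different cycles, and the product $(|S|-1)_{m-1}(|T|-1)_{m-1}$ is a valid lower bound. We may not capture every cycle through the path, since the completing path could also use other vertices or edges inside $S$ or $T$, but a lower bound is all the statement requires.
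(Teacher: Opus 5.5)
Your proposal is correct and follows essentially the same argument as the paper: close $a-z-b$ with an alternating $b$--$a$ path of length $2m-1$ across the monochromatic cut. This gives $(|S|-1)_{m-1}(|T|-1)_{m-1}$ ordered choices of internal vertices. Your injectivity argument (canonical start at $z$, canonical direction $z\to b$) spells out what the paper simply asserts.
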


\begin{proof}
If $|S|<m$ or $|T|<m$, the asserted lower bound is zero and the result is immediate. We may therefore assume that $|S|,|T|\geq m$. Choose an alternating simple path of length $2m-1$ from $b$ to $a$ in the complete bipartite graph between $S$ and $T$. Apart from its ends, this requires ordered choices of $m-1$ vertices from each of $S\setminus\{a\}$ and $T\setminus\{b\}$. Adding $a-z-b$ closes the alternating path into a cycle. Since the above construction yields distinct cycles, there are at least \((|S|-1)_{m-1}(|T|-1)_{m-1}\) cycles.
\end{proof}

\section{Colorings close to the split construction}

For a color $\xi\in\{R,B\}$ and disjoint
nonempty vertex sets $X$ and $Y$, let $e_\xi(X,Y)$ denote the number
of $\xi$-colored edges joining $X$ and $Y$. The density of one color between $X$ and $Y$ is defined by
\[
d_\xi(X,Y):=\frac{e_\xi(X,Y)}{|X||Y|}.
\]
If $|X|\geq2$, let $e_\xi(X)$ denote the number of $\xi$-colored edges
with both endpoints in $X$, and the density inside $X$ is defined by
\[
d_\xi(X):=\frac{e_\xi(X)}{\binom{|X|}{2}}.
\]
Thus
$d_{R}(X,Y)+d_{B}(X,Y)=1$ and
$d_{R}(X)+d_{B}(X)=1$.

We also use colored neighborhoods and degrees. For a color $\xi\in\{R,B\}$, a vertex $v$, and a vertex set $U$, let $N_\xi(v,U)$ be the set of vertices $u\in U\setminus\{v\}$ for which $uv$ has color $\xi$, and write $d_\xi(v,U)=|N_\xi(v,U)|$.


\begin{definition}[$\lambda$-near-split coloring]
A red/blue edge-coloring of $K_N$ is called \emph{$\lambda$-near-split} if there exists a partition $V(K_N)=A\cup B$ with $|A|,|B|\ge (1/2-\lambda)N$ such that, after possibly exchanging the two colors, the red densities inside both $A$ and $B$, and the blue density between $A$ and $B$, are all at least $1-\lambda$.
\end{definition}


We now show that a near-split coloring with at most the target number of cycles has a partition of its entire vertex set into two cliques of one color.
\begin{prop}\label{prop:near-split}
Fix a nonnegative integer $\ell$. 
There is an absolute constant $\lambda_*>0$ with the following property. 
For every sufficiently large odd integer $k$, 
if a $\lambda_*$-near-split coloring $\chi$ of $K_N$ satisfies $\Nmono(C_k,\chi)\le F_\ell(k)$, then $V(K_N)$ has a partition into two monochromatic cliques of the same color.
\end{prop}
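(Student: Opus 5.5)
We clean the near-split partition in stages; at each stage, any surviving defect feeds into one of Lemmas~\ref{lem:wrong-edge}, \ref{lem:two-connectors} or \ref{lem:external-two-path}. The resulting count is then compared with $F_\ell(k)\le C_\ell m^\ell H_m$ (Lemma~\ref{lem:estimate-Fl(k)}) through Lemma~\ref{lem:falling-comparison}. Throughout, $\chi$ is red inside $A,B$ and blue across, and $N=4m+1+\ell$, so $|A|,|B|\ge(1/2-\lambda_*)N\approx 2m$.

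\textbf{Step 1 (typical vertices).} Fix a small $\gamma$ with $\lambda_*\ll\gamma\ll\zeta_0$. Let $A'\subseteq A$ consist of the vertices with at most $\gamma|A|$ blue neighbors in $A$ and at most $\gamma|B|$ red neighbors in $B$, and define $B'$ analogously. By Markov's inequality, $|A\setminus A'|,|B\setminus B'|=O(\lambda_*/\gamma)N$. Hence $|A'|,|B'|\ge(2-\zeta_0/2)m$. Any two vertices of $A'$ have at least $(1-2\gamma)|A'|$ common red neighbors in $A'$ and at least $(1-2\gamma)|B'|$ common blue neighbors in $B'$, and similarly for $B'$.

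\textbf{Step 2 (no wrong edges among typical vertices).} Suppose $uu'$ is a blue edge inside $A'$. Apply Lemma~\ref{lem:wrong-edge} in blue with $S=A'$ and $T=B'$, where $s\ge(1-2\gamma)|B'|\ge(2-\zeta_0)m$. This gives at least $(|A'|-2)_{m-1}(s)_m$ blue $C_k$'s. By Lemma~\ref{lem:falling-comparison}, this exceeds $F_\ell(k)$, a contradiction. The same argument excludes red edges inside $B'$, using red common neighbors in $A'$. Red edges between $A'$ and $B'$ are handled the same way, with Lemma~\ref{lem:wrong-edge} applied in red inside the set $A'\cup\{v\}$. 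Alternatively, two disjoint red cross edges feed into Lemma~\ref{lem:two-connectors}. A single red cross edge is harmless, because it is exactly the $\chi_1$ defect. So after Step 2, $A'$ and $B'$ are red cliques and the cut between them is blue with at most one exception; more precisely, the red cross edges form no two disjoint edges, which is what Lemma~\ref{lem:two-connectors} permits.

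\textbf{Step 3 (absorbing the atypical vertices).} Take $z\notin A'\cup B'$. Since $z$ has degree $N-1$, it has at least about $m(1-O(\lambda_*/\gamma))$ neighbors of some single color in $A'$, and likewise in $B'$.

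If $z$ has a blue neighbor in both $A'$ and $B'$, Lemma~\ref{lem:external-two-path} (with $S=A'$, $T=B'$) yields at least $(|A'|-1)_{m-1}(|B'|-1)_{m-1}>F_\ell(k)$ blue cycles, which is impossible. So, after possibly swapping the names of the parts, $z$ has no blue neighbors in $B'$ and is therefore red to all of $B'$.

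Next we show that $z$ is also blue to all of $A'$. Suppose not, so $z$ has a red edge $za$ with $a\in A'$. Together with the red edge $zb$ for any $b\in B'$, this gives a red two-path from $A'$ to $B'$. Using either a second such atypical vertex or the possible single red cross edge, we obtain two disjoint red connectors of length at most two. Lemma~\ref{lem:two-connectors} then gives too many red cycles.

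Once every atypical vertex is red-complete to $B'$ and blue-complete to $A'$, we move it into $B'$. A blue edge between two moved vertices, or between a moved vertex and $B'$, is excluded by the Step~2 argument with the enlarged sets. Adding a vertex whose behaviour is already fixed only changes the relevant counts by $O(1)$, so Lemma~\ref{lem:falling-comparison} still applies. The result is a partition of $V(K_N)$ into two red cliques, as claimed.

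\textbf{Main obstacle.} The hardest step is Step 3 in the case where $z$'s red neighbors in $A'$ are very few, for instance exactly one, while $z$ is red to all of $B'$. In that case the needed second disjoint connector may not exist, and $z$ may legitimately belong to $B'$ together with a single red cross edge. That configuration is precisely $\chi_1$, which is still a partition into two red cliques, so it is not a counterexample. The care needed is to show that two such exceptional red cross edges, whether they come from different vertices or from the same vertex to two targets, always yield either two disjoint connectors or a red triangle-type configuration covered by Lemma~\ref{lem:wrong-edge}.
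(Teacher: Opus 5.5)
\textbf{There is a genuine gap.} Your overall plan matches the paper's: clean to typical sets, then exclude wrong-colored internal edges by Lemma~\ref{lem:wrong-edge}, two disjoint short red connectors by Lemma~\ref{lem:two-connectors}, and outside vertices with blue neighbours on both sides by Lemma~\ref{lem:external-two-path}. The gap is in how you handle red connectors between the two sides.

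\textbf{The red cut is a star, not at most one edge.} After Step~2 you only know that the red edges between $A'$ and $B'$ have matching number at most one. So they form a star, possibly with many edges. Your first attempt, Lemma~\ref{lem:wrong-edge} in red on $A'\cup\{v\}$, has no set $T$ providing many common red neighbours. Such a star cannot be excluded by counting at all. The cycles $a,b,\dots,b',a$ through a red cherry $b\,a\,b'$ number only $(|B'|-2)_{k-3}$, which is about $(k-2)!<H_m$ when $|B'|\approx k$. Consequently, in Step~3 you may not apply Lemma~\ref{lem:external-two-path} to $S=A'$, $T=B'$, because that lemma needs the entire cut to be blue. The paper instead deletes the centre of the star (a vertex cover of size one, by K\"onig). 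This gives $A_0,B_0$ with a completely blue cut, and the deleted vertex joins the exceptional set.

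\textbf{Your Step~3 claim is false.} You claim that an atypical $z$ which is red to all of $B'$ must be blue to all of $A'$. Take $\chi_1(\alpha,\beta)$ with cliques $P,Q$ and red cross-edge $pq$, together with the near-split partition $A=P\cup\{q\}$, $B=Q\setminus\{q\}$. Then $q$ is atypical and red to all of $B'$, and it has the red neighbour $p\in A'$. Yet there is no second atypical vertex and no red $A'$--$B'$ edge to supply another connector. Your closing step, excluding blue edges among moved vertices ``by the Step~2 argument'', depends on this false claim. Your last paragraph only describes what would have to be shown. The dichotomy proposed there cannot work for a vertex sending a red star into $A'$, by the same count as above.

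\textbf{What is missing is the paper's argument.}
\begin{enumerate}
\item Split the exceptional vertices into $Z_1$ (red to all of $A_0$) and $Z_2$ (red to all of $B_0$).
\item Red edges $Z_1$--$B_0$ and $Z_2$--$A_0$ cannot coexist, since they would give two disjoint red $2$-paths. So after swapping the sides, every $Z_1$--$B_0$ edge is blue.
\item The red edges between $A_0$ and $Z_2$ again have matching number at most one. Deleting one cover vertex $x$ leaves $\widetilde A=A_1\cup Z_1$ blue to all of $B_0$, and $\widetilde B=B_0\cup Z_2'$ blue to all of $A_1$.
\item Both $\widetilde A$ and $\widetilde B$ are then red cliques by Lemma~\ref{lem:wrong-edge}, with $T=B_0$ or $T=A_1$ respectively.
\item Finally, $x$ is added to a clique to which it is red-complete. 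Such a clique exists: blue neighbours $u\in\widetilde A$ and $v\in\widetilde B$ of $x$ would give the blue cycles $x,u,b_1,a_1,\dots,b_{m-1},a_{m-1},v,x$, at least $(|B_0|-1)_{m-1}(|A_1|-1)_{m-1}>F_\ell(k)$ of them.
\end{enumerate}
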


\begin{proof}
Let \(\zeta_0\) be the constant supplied by
Lemma~\ref{lem:falling-comparison}. Choose
\(\eta_0\in(0,\zeta_0/100)\), and then choose an absolute constant
\(\lambda_*>0\) so small that \(\lambda_*<\eta_0^2/4\).
Finally, choose $k_0$ sufficiently large in terms of $\ell$ so that
all estimates below hold whenever $k\ge k_0$.

Let \(\chi\) be a \(\lambda_*\)-near-split coloring of \(K_{N}\).
Choose a partition \(V(K_{N})=A\mathbin{\cup}B\) satisfying this
property. After exchanging the colors if necessary, assume that the red
densities inside \(A\) and \(B\), and the blue density between \(A\) and
\(B\), are all at least \(1-\lambda_*\). Let 
\[
\eta=\sqrt{\lambda_*+\frac{1}{k}}.
\]
Then \(\eta<\eta_0\).

We first remove vertices whose degrees do not reflect the three density assumptions. 
Set 
$X_1=\{a \in A:d_R(a,A)<(1-\eta)|A|\}$ and $X_2=\{a \in A:d_B(a,B)<(1-\eta)|B|\}$.
Write \(x=|X_1|/|A|\). Since the
vertices in \(X_1\) have red degree less than \((1-\eta)|A|\), while
every other vertex has red degree less than \(|A|\), the average red
degree in \(A\) is less than $x(1-\eta)|A|+(1-x)|A|=(1-x\eta)|A|$. On the other hand,
the red density inside \(A\) is at least \(1-\lambda_*\), so the average red
degree in \(A\) is at least \((1-\lambda_*)(|A|-1)\). Consequently,
\[
x\eta
<\lambda_*+\frac{1-\lambda_*}{|A|}
\leq\lambda_*+\frac1{|A|}.
\]
Since \(|A|\geq(1/2-\lambda_*)(2k-1+\ell)\), the choices of \(\lambda_*\)
and \(k\) ensure that \(|A|\geq k/2\). Using
\(\eta^2=\lambda_*+1/k\), we obtain
$x\eta\leq\lambda_*+2/k\leq2\eta^2$.
Therefore \(x\leq2\eta\), and hence \(|X_1|\leq2\eta|A|\).
The same averaging argument gives
\(|X_2|\leq2\eta|A|\), and we define
\(A'=A\setminus(X_1\cup X_2)\).
 Then \(|A'|\geq(1-4\eta)|A|\).

 Set $Y_1=\{b\in B:d_R(b,B)<(1-\eta)|B|\}$ and $Y_2=\{b\in B:d_B(b,A)<(1-\eta)|A|\}$.
Let \(B'=B\setminus(Y_1\cup Y_2)\). Similarly, we have
\(|Y_1|,|Y_2|\leq2\eta|B|\) and \(|B'|\geq(1-4\eta)|B|\).

Moreover, the $\lambda_*$-near-split assumption and
\(\eta^2=\lambda_*+1/k\) imply, for sufficiently large \(k\),
\[
|A|,|B|\geq\left(\frac12-\lambda_*\right)(2k-1+\ell)
\geq(1-2\eta)k.
\]
Consequently,
\begin{equation}\label{eq:clean-size}
|A'|,|B'|\geq(1-4\eta)(1-2\eta)k
\geq(1-8\eta)k.
\end{equation}
Every $a\in A'$ satisfies $d_R(a,A')\geq(1-5\eta)|A|$ and $d_B(a,B')\geq(1-5\eta)|B|$
and the analogous inequalities hold for every vertex of $B'$. In particular, any two vertices of $A'$ have at least
\begin{equation}\label{eq:common-blue}
2(1-5\eta)|B|-|B'|\geq(1-14\eta)k
\end{equation}
common blue neighbors in $B'$. The same statement holds after exchanging $A'$ and $B'$.

Since \(\eta<\eta_0<\zeta_0/100\) and
\(k=2m+1\), the choice of \(k_0\) ensures that $(1-14\eta)k-O(1)\geq(2-\zeta_0)m$.
Hence Lemma~\ref{lem:falling-comparison} applies whenever it is invoked
below.
If $A'$ contains a blue edge, Lemma \ref{lem:wrong-edge}, \eqref{eq:clean-size}, \eqref{eq:common-blue}, and Lemma \ref{lem:falling-comparison} produce at least $(|A'|-2)_{m-1}((1-14\eta)k)_m>F_{\ell}(k)$ blue copies of $C_k$, a contradiction. 
The same applies to $B'$. We may consequently assume that $A'$ and $B'$ are red cliques.

There cannot be two vertex-disjoint red paths of length at most two joining $A'$ to $B'$, 
unless there are already more than $F_{\ell}(k)$ red copies of $C_k$. To see this, replace a two-edge path whose internal vertex belongs to $A'\cup B'$ by the red cross-edge contained in that path. The two resulting paths remain vertex-disjoint and have all internal vertices outside $A'\cup B'$. Lemmas \ref{lem:two-connectors} and \ref{lem:falling-comparison} now apply. In particular, the red bipartite graph between $A'$ and $B'$ has matching number at most one. Its edges therefore have a common end. Deleting that end when necessary leaves red cliques $A_0\subseteq A'$ and $B_0\subseteq B'$ with every edge between them blue.

Let $Z=V(K_N)\setminus(A_0\cup B_0)$. 
If some $z\in Z$ has a blue neighbor in each of $A_0$ and $B_0$, 
Lemmas \ref{lem:external-two-path} and \ref{lem:falling-comparison} again yield more than $F_{\ell}(k)$ blue cycles. 
We may thus partition $Z=Z_1\cup Z_2$ so that each vertex of $Z_1$ is red to all of $A_0$ and each vertex of $Z_2$ is red to all of $B_0$. A vertex that is red to both sides may be assigned arbitrarily.

Suppose that there are red edges $z_1b_1$ and $z_2a_2$ with $z_1\in Z_1$, $b_1\in B_0$, $z_2\in Z_2$, and $a_2\in A_0$. Choose $a_1\in A_0\setminus\{a_2\}$ and $b_2\in B_0\setminus\{b_1\}$. 
Then the paths $a_1-z_1-b_1$ and $a_2-z_2-b_2$ are vertex-disjoint red paths
of length two, so Lemmas \ref{lem:two-connectors} and
\ref{lem:falling-comparison} imply that there are more than
$F_{\ell}(k)$ red copies of $C_k$. After exchanging the two sides, we may assume that every edge between $Z_1$ and $B_0$ is blue.

The red bipartite graph between $A_0$ and $Z_2$ also has matching number at most one. Otherwise, two disjoint red edges $a_1z_1$ and $a_2z_2$, together with two distinct vertices of $B_0$, form two vertex-disjoint red paths of length two joining $A_0$ to $B_0$. Delete the common end of all red edges between $A_0$ and $Z_2$, if such an end is needed. This produces $A_1\subseteq A_0$ and $Z'_2\subseteq Z_2$ such that every edge between $A_1$ and $Z'_2$ is blue. Only this last deletion permanently removes a vertex. The earlier deletion from $A'\cup B'$ placed its vertex in $Z$.

Define $\widetilde A=A_1\cup Z_1$ and $\widetilde B=B_0\cup Z'_2$.
These sets are disjoint and satisfy
\begin{equation}\label{eq:two-red-sets-sum}
|\widetilde A|+|\widetilde B|\geq N-1=2k-2+\ell.
\end{equation}
Every vertex of $\widetilde A$ is blue to all of $B_0$, and every vertex of $\widetilde B$ is blue to all of $A_1$. Both $A_1$ and $B_0$ have order $(1-O(\eta))k$. If $\widetilde A$ contains a blue edge, its ends have all of $B_0$ as common blue neighbors. 
Lemmas \ref{lem:wrong-edge} and \ref{lem:falling-comparison} then give more than $F_{\ell}(k)$ blue cycles. The same argument applies to $\widetilde B$. We may therefore assume that both $\widetilde A$ and $\widetilde B$ are red cliques.

If their union is $V(K_N)$, the proof is complete. Otherwise, let $x$ be the unique remaining vertex. Suppose $x$ has blue neighbors $u\in\widetilde A$ and $v\in\widetilde B$. Choose distinct $b_1,\ldots,b_{m-1}\in B_0\setminus\{v\}$ and distinct $a_1,\ldots,a_{m-1}\in A_1\setminus\{u\}$. Then
\[
x,u,b_1,a_1,b_2,a_2,\ldots,b_{m-1},a_{m-1},v,x
\]
is a blue cycle of length $2m+1$. Following it from $x$ through the fixed neighbor $u$ recovers both ordered lists. Distinct choices therefore yield distinct unrooted cycles, and their number is at least $(|B_0|-1)_{m-1}(|A_1|-1)_{m-1}>F_\ell(k)$, a contradiction. Thus $x$ has no blue neighbor in one of the two cliques. It is red to every vertex of that clique and can be added to it. The resulting two red cliques partition $V(K_N)$.
\end{proof}

\section{Weighted templates and a robust non-bipartite core}


We next develop the main structural tool for colorings that are far from every split coloring. 



Let $I$ be a finite index set, and let $c=(c_i)_{i\in I}$ satisfy $c_i>0$ for every $i\in I$ and $\sum_{i\in I}c_i=2$.
In the later regularity application, $I$ indexes the clusters $V_i$,
and $c_i=2|V_i|/N$ is the normalized size of $V_i$. 

Let $p=(p_{ij})$ and $q=(q_{ij})$ be symmetric matrices. 
Their intended
meaning is that $p_{ij}$ and $q_{ij}$ record the retained red and blue
densities, respectively, between clusters $V_i$ and $V_j$. Densities
below a fixed threshold, as well as densities on irregular pairs, are
discarded and replaced by zero. We therefore assume $p_{ii}=q_{ii}=0$ and $0\leq p_{ij},q_{ij}\leq1$ with $p_{ij}+q_{ij}\leq1$. 
Define
\[
R_p=\sum_{i,j\in I}c_ic_jp_{ij}
\quad \text{and} \quad 
R_q=\sum_{i,j\in I}c_ic_jq_{ij}.
\]

For any specified collection of ordered pairs, its ordered capacity, $p$-mass, and $p$-defect are obtained by summing $c_ic_j$, $c_ic_jp_{ij}$, and $c_ic_j(1-p_{ij})$, respectively, over those pairs. The $q$-mass and $q$-defect are defined analogously. Relative to a partition of $I$, a defect is called internal or cross according as the corresponding ordered pairs lie within the parts or between them. Define
\begin{equation}\label{eq:total-defect}
\Delta(p,q)=\sum_{i,j\in I}c_ic_j(1-p_{ij}-q_{ij}).
\end{equation}
The difference $1-p_{ij}-q_{ij}$ is the density omitted from the pair
$(i,j)$.
Thus $\Delta(p,q)$ is the total weighted density omitted from the
template.

For $X\subseteq I$, write $c(X)=\sum_{i\in X}c_i$ for its capacity
and define
\begin{align}\label{eq:split-defect}
D_{p,q}(X)={}&|c(X)-1|
 +\sum_{\substack{i,j\in X\\\text{or }i,j\notin X}}c_ic_j(1-p_{ij}) +\sum_{\substack{i\in X,\,j\notin X\\\text{or }i\notin X,\,j\in X}}c_ic_j(1-q_{ij}).
\end{align}
Thus $D_{p,q}(X)$ is $|c(X)-1|$ plus the internal $p$-defect and the cross $q$-defect for the partition with parts $X$ and $I\setminus X$.
The three terms measure the imbalance of the two parts, the missing
red density within the parts, and the missing blue density across the
cut. Hence $D_{p,q}(X)$ is small when this partition is close to a
split coloring with red inside the parts and blue between them.

The distance to a balanced split template is
\[
\spl(p,q)=\min_{X\subseteq I}\min\{D_{p,q}(X),D_{q,p}(X)\}.
\]
The first minimum chooses the partition, and the second allows the two
colors to be exchanged. Thus $\spl(p,q)$ measures how close the
weighted template is to any balanced split coloring.

For a symmetric nonnegative matrix $w=(w_{ij})$, let $G_w$ be the graph on $I$ in which $ij$ is an edge precisely when $i\neq j$ and $w_{ij}>0$. As usual, we identify each connected component of $G_w$ with its vertex set. For every nonempty subset $J\subseteq I$, write $\mu_w(J)=\sum_{i,j\in J}c_ic_jw_{ij}$.
For each $i\in J$, define
$d_{w,J}(i)=\sum_{j\in J}c_jw_{ij}$.
Thus $\mu_w(J)=\sum_{i\in J}c_i d_{w,J}(i)$.

The weighted counting argument used later leads to an entropy expression in the normalized degrees $d_{w,J}(i)/\mu_w(J)$. We first isolate its one-variable part and record the lower bound needed below.

Let
\[
f(z)=z\log z-(1-z)\log(1-z),
\qquad 0\leq z\leq1.
\]
Here and below, we interpret $0\log0$ as $0$.

\begin{lemma}\label{lem:f-bound}
For every $0\leq z\leq1$,
\begin{equation}\label{eq:f-bound}
f(z)\geq-\frac{1-z}{2}.
\end{equation}
Consequently, if $G_w[J]$ is connected and non-bipartite and $d_{w,J}(i)\leq\mu_w(J)$ for every $i\in J$, then
\begin{equation}\label{eq:gamma-lower}
\Gamma(w,c;J):=\log\mu_w(J)+\sum_{i\in J}c_i f\left(\frac{d_{w,J}(i)}{\mu_w(J)}\right)
\geq \log\mu_w(J)-\frac{c(J)-1}{2}.
\end{equation}
\end{lemma}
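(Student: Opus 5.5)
The plan is to prove the one-variable inequality \eqref{eq:f-bound} by splitting $[0,1]$ at a fixed point and using two elementary logarithm bounds. The bound \eqref{eq:gamma-lower} then follows by summing with weights $c_i$ and using the identity $\mu_w(J)=\sum_{i\in J}c_id_{w,J}(i)$.

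For \eqref{eq:f-bound}, I would treat the two ranges of $z$ separately, with the split chosen so that each bound is easy to verify.
\begin{enumerate}
\item \emph{Range $z\ge 1-e^{-1/2}$.} Use $\log z\ge 1-1/z$, which gives $z\log z\ge z-1$. Hence
\[
f(z)\ge (1-z)\bigl(-1-\log(1-z)\bigr).
\]
The right-hand side is at least $-(1-z)/2$ exactly when $\log(1-z)\le -1/2$, i.e. when $z\ge 1-e^{-1/2}\approx 0.393$. At $z=1$ both sides are $0$, and $0\log 0=0$ is consistent with this.
\item \emph{Range $0\le z\le 2/5$.} Use $-\log(1-z)\ge z$, which gives $-(1-z)\log(1-z)\ge z(1-z)$. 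Together with $z\log z\ge -1/e$ this yields
\[
f(z)+\frac{1-z}{2}\ge -\frac1e+(1-z)\Bigl(z+\frac12\Bigr).
\]
The function $(1-z)(z+\tfrac12)$ is concave, so its minimum on $[0,2/5]$ is attained at an endpoint. Its endpoint values are $1/2$ and $0.54$, so it is at least $1/2>1/e$ on this interval.
\end{enumerate}
Since $1-e^{-1/2}<2/5$, the two ranges cover $[0,1]$, and \eqref{eq:f-bound} holds everywhere. The only delicate point is choosing the split so that both crude bounds suffice; the numbers above show that they do, with room to spare.

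For \eqref{eq:gamma-lower}, first observe that $G_w[J]$ is non-bipartite, so it contains an edge $ij$ with $w_{ij}>0$. Therefore $\mu_w(J)>0$, and $\Gamma(w,c;J)$ is well defined. Set $x_i=d_{w,J}(i)/\mu_w(J)$. Then $x_i\ge0$ because $w$ is nonnegative, and $x_i\le1$ by the hypothesis $d_{w,J}(i)\le\mu_w(J)$, so \eqref{eq:f-bound} applies to each $x_i$. Multiplying by $c_i>0$ and summing over $i\in J$ gives
\[
\sum_{i\in J}c_if(x_i)\ge -\frac12\Bigl(c(J)-\sum_{i\in J}c_ix_i\Bigr)=-\frac{c(J)-1}{2},
\]
because $\sum_{i\in J}c_id_{w,J}(i)=\mu_w(J)$. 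Adding $\log\mu_w(J)$ to both sides gives \eqref{eq:gamma-lower}. Connectivity is not needed here; only the existence of an edge and the degree hypothesis are used.
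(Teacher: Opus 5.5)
Your proof is correct. It follows a different route from the paper for the one-variable inequality; the second part (summing with weights $c_i$ and using $\sum_{i\in J}c_i d_{w,J}(i)=\mu_w(J)$) is exactly the paper's argument. Your remark that non-bipartiteness forces $\mu_w(J)>0$, so that $\Gamma$ is defined, makes explicit a point the paper leaves implicit.

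For \eqref{eq:f-bound}, the paper splits $[0,1]$ into three pieces and works with calculus on $f$ itself:
\begin{itemize}
\item On $[1/2,1]$ it uses $f''(z)=(1-2z)/(z(1-z))\le 0$ together with $f(1/2)=f(1)=0$ to get $f\ge 0$.
\item On $[1/4,1/2]$ it uses $f'(z)=\log(z(1-z))+2>0$ to get $f(z)\ge f(1/4)>-1/4$.
\item On $[0,1/4]$ it drops the nonnegative term $-(1-z)\log(1-z)$. It then checks that $z\log z+(1-z)/2$ is decreasing there and still positive at $z=1/4$.
\end{itemize}
You instead use two overlapping ranges. Your only tools are the standard bounds $z\log z\ge z-1$, $\log(1-z)\le -z$ and $z\log z\ge -1/e$, so each case becomes a transparent comparison.

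What each buys:
\begin{itemize}
\item Your version avoids all derivative sign checks. It also has more numerical slack: your second case wins by $1/2-1/e\approx 0.13$, whereas the paper's case $0\le z\le 1/4$ rests on the much tighter $3/8-\tfrac14\log 4\approx 0.028$.
\item The paper's version gives the slightly stronger by-product $f\ge 0$ on $[1/2,1]$. This is never used later, since only \eqref{eq:gamma-lower} is invoked downstream, so either argument suffices.
\end{itemize}
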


\begin{proof}
Since $f''(z)=(1-2z)/(z(1-z))\leq0$ for $1/2<z<1$ and $f(1/2)=f(1)=0$, we have $f(z)\geq0$ for $1/2\leq z\leq1$. If $0\leq z\leq1/4$, then
\[
f(z)+\frac{1-z}{2}\geq z\log z+\frac{1-z}{2},
\]
and the expression on the right is decreasing on this interval and positive at $z=1/4$. It follows that 
\[f(z)+\frac{1-z}{2}\geq \frac{3}{8}-\frac{1}{4}\log 4>0.\]
If $1/4\leq z\leq1/2$, then $f'(z)=\log(z(1-z))+2>0$, so $f(z)\geq f(1/4)>-1/4\geq-(1-z)/2$. This proves \eqref{eq:f-bound}. For the second assertion, abbreviate $\mu=\mu_w(J)$ and $d_i=d_{w,J}(i)$. Since $\sum_{i\in J}c_id_i/\mu=1$, summing \eqref{eq:f-bound} over $i\in J$ yields \eqref{eq:gamma-lower}.
\end{proof}

\begin{lemma}\label{lem:template-near-equality}
Suppose that $R_p\geq2-\Delta/2$ and $\Delta=\Delta(p,q)\leq 1/100$.
If every non-bipartite component of $G_p$ has capacity at most one, then
\begin{equation}\label{eq:near-equality-split}
\spl(p,q)\leq40\sqrt{\Delta}.
\end{equation}
\end{lemma}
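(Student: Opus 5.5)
The plan is to prove the lemma by a stability version of a simple extremal inequality for red mass, organised by the connected components of $G_p$. First note that $\sum_{i,j\in I}c_ic_j=4$ and $p_{ii}=q_{ii}=0$, so by \eqref{eq:total-defect} we have $R_p+R_q=4-\Delta$. The hypothesis $R_p\ge 2-\Delta/2$ therefore says that red carries essentially half of the total capacity. Since $p_{ij}=0$ between different components, $R_p=\sum_J\mu_p(J)$, where $J$ runs over the components of $G_p$. Write $x_J=c(J)$, so that $\sum_J x_J=2$.

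\textbf{Step 1: upper bound for each component.} If $J$ is non-bipartite, then $\mu_p(J)\le x_J^2-\sum_{i\in J}c_i^2\le x_J^2$, and by hypothesis $x_J\le1$. If $J$ is bipartite with sides $J_1,J_2$ of capacities $y_J,x_J-y_J$, then $p$ vanishes inside the sides, so
\[
\mu_p(J)\le 2y_J(x_J-y_J)\le x_J^2/2 .
\]
Summing over components gives $R_p\le\sum_{\rm nonbip}x_J^2+\sum_{\rm bip}2y_J(x_J-y_J)$.

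\textbf{Step 2: identify the two near-extremal shapes.} Subject to $\sum x_J=2$ and $x_J\le1$ for non-bipartite $J$, the right-hand side is at most $2$. It equals $2$ only in two cases:
\begin{enumerate}[label=(\alph*)]
\item exactly two non-bipartite components, each of capacity $1$;
\item a single bipartite component of capacity $2$ with sides of capacity $1$ each.
\end{enumerate}
I will make this quantitative. Order the components by capacity. If the largest component is non-bipartite and $x_{\max}\le1$, then $\sum x_J^2\le x_{\max}\sum x_J$ combined with the convexity deficit gives $2-\sum x_J^2\gtrsim (1-x_1)+(1-x_2)+\sum_{J\ne J_1,J_2}x_J$. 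Bipartite pieces lose at least $x_J^2/2$ relative to $x_J^2$. Comparing with $2-R_p\le\Delta/2$, I expect to show one of two things:
\begin{enumerate}[label=(\roman*)]
\item there are two components $J_1,J_2$ with $|c(J_1)-1|,|c(J_2)-1|=O(\sqrt\Delta)$ and all remaining capacity $O(\sqrt\Delta)$;
\item there is a bipartite component with sides $X,Y$ satisfying $|c(X)-1|=O(\sqrt\Delta)$ and $c(I\setminus(X\cup Y))=O(\sqrt\Delta)$.
\end{enumerate}
The square root enters because the balance of a bipartite component, $2y(2-y)=2-2(1-y)^2$, is controlled only quadratically. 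The assumption $\Delta\le1/100$ is used to exclude mixed configurations: a moderately sized bipartite component together with a non-bipartite one loses a constant amount of mass.

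\textbf{Step 3: bound $D$.} In case (i), take $X=J_1$ plus any leftover small components. The term $|c(X)-1|$ is $O(\sqrt\Delta)$. The internal $p$-defect equals $\sum_{\text{parts}}c(\cdot)^2-R_p$ plus the red mass crossing parts, and the latter is zero except for the $O(\sqrt\Delta)$ leftovers; hence it is $O(\sqrt\Delta)$. The cross $q$-defect is $\sum_{\text{cross}}c_ic_j(1-p_{ij}-q_{ij})\le\Delta$ on pairs with $p_{ij}=0$, plus $O(\sqrt\Delta)$ from the leftovers. This bounds $D_{p,q}(X)$.

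In case (ii), use $D_{q,p}(X)$ with $X$ one side of the bipartite component, with colours exchanged. Inside each side $p=0$, so the internal $q$-defect is at most $\Delta$ plus leftover terms. The cross $p$-defect is $2c(X)c(Y)-\mu_p\le 2-R_p+O(\sqrt\Delta)$.

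\textbf{Main obstacle.} The hard step is Step 2, namely turning the one-line extremal inequality into explicit stability with the constant $40$. I would first write
\[
2-R_p\ \ge\ \sum_J\bigl(x_J\cdot\max(\text{slack})\bigr)
\]
via the inequality $x_J^2\le x_J$ for non-bipartite components and $2y(x-y)\le x^2/2$ for bipartite ones. This isolates the lost capacity linearly, and it then remains to track the bipartite imbalance term $2(1-y)^2$, which costs the square root.
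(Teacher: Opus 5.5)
Your proposal is correct and follows essentially the same route as the paper. With $\rho=2-R_p\le\Delta/2$, you write $\rho=\sum_s\bigl(c(J_s)-\mu_p(J_s)\bigr)$ over the components $J_s$ of $G_p$, with every term nonnegative. You then deduce either two non-bipartite components of capacity close to one, or a single nearly balanced bipartite component of capacity close to two, and bound $D_{p,q}(J_1)$ or $D_{q,p}(U)$ for a side $U$ of that bipartite component. For your Step 2, the paper splits on whether some bipartite component has capacity $C>1$. If so, the identity $C-2ab=C(2-C)/2+(a-b)^2/2$ (with $a,b$ the side capacities) gives $2-C\le2\rho$ and $|a-b|\le\sqrt{2\rho}$. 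If not, every bipartite or small non-bipartite component loses at least half its capacity, so all quantities except the bipartite imbalance are in fact $O(\Delta)$.
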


\begin{proof}
Let $J_1,\ldots,J_t$ be the components of $G_p$, and write $C_s=c(J_s)$ and $r_s=\mu_p(J_s)$ for $1\leq s\leq t$. For every $s$, we have $r_s\leq C_s$. Indeed, if $J_s$ is non-bipartite, then $C_s\leq1$ by assumption, and hence $r_s\leq C_s^2\leq C_s$. If $J_s$ is bipartite with parts $U_s$ and $V_s$, then
$r_s\leq2c(U_s)c(V_s)\leq C_s^2/2\leq C_s$, where the last inequality follows from $C_s\leq2$. The components $J_1,\ldots,J_t$ form a partition of $I$, so $\sum_{s=1}^t C_s=2$. Moreover, $p_{ij}=0$ whenever $i\in J_s$ and $j\in J_{s'}$ for distinct $s,s'$, and hence $\sum_{s=1}^t r_s=R_p$. Therefore
\begin{equation}\label{eq:rho-deficit}
\rho:=2-R_p=\sum_s(C_s-r_s),
\quad
0\leq\rho\leq\frac{\Delta}{2}.
\end{equation}

First, suppose that $J$ is a bipartite component of $G_p$ with bipartition $(U,V)$ and capacity $C>1$. Write $a=c(U)$, $b=c(V)$, and $\omega=2-C$. If $r=\mu_p(J)$, then
\[
C-r\geq C-2ab=\frac{C\omega}{2}+\frac{(a-b)^2}{2}.
\]
Together with \eqref{eq:rho-deficit}, this yields
\[
\omega\leq2\rho,
\quad |a-b|\leq\sqrt{2\rho},
\quad 2ab-r\leq\rho.
\]
Take $X=U$ and compare with the split template having $q$ inside the parts and $p$ across them. We have
\[
|c(X)-1|\leq \frac{\omega}{2}+\frac{|a-b|}{2}\leq\rho+\sqrt{\frac\rho2}.
\]
The $q$-defect inside $U$ and $V$ is at most $\Delta$. All ordered pairs having an end outside $J$ have total capacity at most $4\omega$. The $p$-defect across $U$ and $V$ is $2ab-r\leq\rho$, and the remaining cross $p$-defect is at most $4\omega$. Therefore
\[
D_{q,p}(X)\leq2\rho+\sqrt{\frac\rho2}+\Delta+8\omega\leq40\sqrt\Delta.
\]

We may now assume that every bipartite component has capacity at most one. For every bipartite component $J_s$, we have
$C_s-r_s\geq C_s-C_s^2/2\geq C_s/2$. The same bound holds for every non-bipartite component $J_s$ with $C_s\leq1/2$, since
$C_s-r_s\geq C_s-C_s^2\geq C_s/2$. By \eqref{eq:rho-deficit}, the components in these two classes have total capacity at most $2\rho$.
Every component outside these two classes is non-bipartite and satisfies $1/2<C_s\leq1$ and
\[
C_s-r_s\geq C_s-C_s^2\geq\frac{1-C_s}{2},
\]
so $C_s\geq1-2\rho$. Since the capacities of all components sum to $2$, these components have total capacity at least $2-2\rho$. As $2-2\rho>1$ and each such component has capacity at most $1$, there are at least two such components. Since each has capacity at least $1-2\rho>3/4$, there are at most two. Hence there are exactly two such components, which we denote by $J_1$ and $J_2$. Let $O$ be the union of the vertex sets of all other components. Then $c(O)\leq2\rho$.

Take $X=J_1$ and compare with the split template having $p$ inside the parts and $q$ across them. We have $|c(X)-1|\leq2\rho$. For $s\in\{1,2\}$, the internal $p$-defect on $J_s$ is $C_s^2-r_s\leq C_s-r_s$, since $C_s\leq1$. Hence the total internal $p$-defect on $J_1$ and $J_2$ is at most $\rho$. The ordered capacity of pairs with at least one endpoint in $O$ is at most $4c(O)\leq8\rho$. Thus the total internal $p$-defect is at most $9\rho$. There are no positive $p$-weights between $J_1$ and $J_2$, so the cross $q$-defect between them is at most $\Delta$. The ordered capacity of the cross-pairs between $J_1$ and $O$ is at most $2C_1c(O)\leq4\rho$. Consequently,
\[
D_{p,q}(X)\leq2\rho+9\rho+\Delta+4\rho
=15\rho+\Delta
\leq40\sqrt{\Delta}.
\]
This proves \eqref{eq:near-equality-split}.
\end{proof}

\begin{lemma}\label{lem:robust-core}
For every $\delta>0$, there are positive constants $\eta,\gamma,\sigma$, and $a_0$ with the following property. Suppose that $\Delta(p,q)\leq\eta$ and $\spl(p,q)\geq\delta$.
Then there exist a set $J\subseteq I$ and a symmetric nonnegative matrix $w=(w_{ij})_{i,j\in I}$. The matrix $w$ vanishes whenever $i\notin J$ or $j\notin J$, and $G_w[J]$ is connected and non-bipartite. Moreover, either $w_{ij}\leq p_{ij}$ for all $i,j\in I$, or $w_{ij}\leq q_{ij}$ for all $i,j\in I$. It also satisfies
\begin{equation}\label{eq:capacity-margin}
d_{w,J}(i)\leq(1-\sigma)\mu_w(J)
\quad\text{for every }i\in J,
\end{equation}
and $\Gamma(w,c;J)\geq\gamma$.
If every positive entry of $p$ and $q$ is at least $\tau>0$, then $w$ may be chosen so that every positive entry of $w$ is at least $a_0\tau$.
\end{lemma}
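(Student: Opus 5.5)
We plan to reduce to a single colour class and then take $w$ to be (a pruned and rescaled copy of) that colour restricted to one large non-bipartite component. Since $p_{ii}=q_{ii}=0$ and $\sum_{i,j}c_ic_j=c(I)^2=4$, definition \eqref{eq:total-defect} gives $R_p+R_q=4-\Delta(p,q)$. After exchanging the roles of $p$ and $q$ if necessary, we have $R_p\ge 2-\Delta/2$; the statement is symmetric in the two colours, since $\spl(p,q)=\spl(q,p)$. We will choose $\eta\le 1/100$ so small that $40\sqrt{\eta}<\delta$. Lemma~\ref{lem:template-near-equality} then shows that $G_p$ has a non-bipartite component $J_0$ of capacity $C=c(J_0)>1$.

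What we actually need is a quantitative strengthening: $C\ge 1+\kappa$ for some $\kappa=\kappa(\delta)>0$. We would obtain it by rerunning the proof of Lemma~\ref{lem:template-near-equality}. That proof only uses the bounds $r_s\le C_s$ and the deficit identity \eqref{eq:rho-deficit}. If every non-bipartite component has capacity at most $1+\kappa$, then $r_s\le C_s^2\le C_s+3\kappa$ for such a component, and the same bookkeeping goes through with $\rho$ replaced by $\rho+O(\kappa)$. The conclusion becomes $\spl(p,q)\le 40\sqrt{\Delta}+O(\sqrt{\kappa})$, and choosing $\eta$ and $\kappa$ small relative to $\delta$ contradicts $\spl\ge\delta$.

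With $J:=J_0$ and $w=p$ restricted to $J\times J$, the set $G_w[J]$ is connected and non-bipartite, and $w\le p$. The remaining components $J_s$ contribute at most about $\sum_s C_s=2-C$ to $R_p$, up to $O(\kappa)$ for other non-bipartite pieces. Hence $\mu_w(J)\ge C-O(\Delta+\kappa)$, and we aim for $\mu_w(J)$ well above $C$ by exploiting the margin $C\ge1+\kappa$.

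Granting \eqref{eq:capacity-margin}, Lemma~\ref{lem:f-bound} gives $\Gamma(w,c;J)\ge\log\mu_w(J)-(C-1)/2$. The function $g(C)=\log C-(C-1)/2$ is strictly positive on $(1,2]$ and is at least a constant $\gamma_0(\kappa)>0$ on $[1+\kappa,2]$. So $\Gamma\ge\gamma:=\gamma_0/2$ once $\mu_w(J)\ge C(1-\gamma_0/4)$, which the estimate above secures for $\eta,\kappa$ small.

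The main obstacle is the degree margin \eqref{eq:capacity-margin}: a priori a single cluster $i$ could have $d_{w,J}(i)$ close to $\mu_w(J)$. The first thing we would try is to exploit $d_{w,J}(i)\le C-c_i$ together with a lower bound on $\mu_w(J)$ close to $C^2$. Such a bound should follow because $R_p\approx 2$ forces $J$ to be nearly complete in $p$ when $J$ absorbs more than half the capacity, and then $C\le(1-\sigma)C^2$ since $C\ge1+\kappa$.

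If that fails, we would prune. We remove from $J$ the clusters of exceptionally large normalized degree, or damp their rows of $w$ by a constant factor $a_0$. We then check two things: the loss in $\mu_w$ and in the $f$-sum is $O(\sigma)$, and connectivity and non-bipartiteness survive. The latter is guaranteed because the near-complete structure contains many triangles avoiding any small set. Since we only restrict $p$ to a subset and multiply by constants bounded below, every positive entry of the resulting $w$ is at least $a_0\tau$ whenever the positive entries of $p$ are at least $\tau$. This gives the final clause of the lemma.
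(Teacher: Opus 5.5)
Your reduction is sound: the colour exchange, the non-bipartite $p$-component $J$ with $C=c(J)>1$, and the bound $\Gamma(w,c;J)\ge\log\mu_w(J)-(C-1)/2$ once the margin is known. The bound $C\ge1+s_0(\delta)$ is also easier than rerunning Lemma~\ref{lem:template-near-equality}: comparing with the split $(J,I\setminus J)$ gives $\spl(p,q)\le s+2s^2+\tfrac32\Delta(p,q)$ with $s=C-1$.

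The genuine gap is the degree margin \eqref{eq:capacity-margin}, and your main mechanism for it rests on a false claim. The hypothesis $R_p\ge2-\Delta/2$ does not make $J$ nearly $p$-complete. The complement of $J$ carries $p$-mass at most $(2-C)^2$, so what one actually gets is $\mu_p(J)\ge C+(C-1)(2-C)-\Delta/2$. Together with $d_{p,J}(i)\le C$, this gives the margin when $2-C\ge\kappa$ (the paper's first case). But as $C\to2$ it only gives $\mu_p(J)\gtrsim2$, far below $C^2\approx4$, and the margin can genuinely fail. For instance, let $U\subseteq I$ have $c(U)=2-\sqrt2$. Put $p_{ij}=1$ for all $i\ne j$ with $\{i,j\}\cap U\ne\emptyset$, and $q_{ij}=1$ for distinct $i,j\in W=I\setminus U$. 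Ignoring the vanishing diagonal terms, $R_p=R_q=2$ and $\Delta=0$. Also $\spl(p,q)\ge3-2\sqrt2$, since for every $X$ the internal $q$-defect inside $U$ gives $D_{q,p}(X)\ge c(U)^2/2$, and the internal $p$-defect inside $W$ gives $D_{p,q}(X)\ge c(W)^2/2=1$. Yet $J=I$ and $d_{p,J}(i)=2=\mu_p(J)$ for every $i\in U$.

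Your fallback does not close this as stated. Deleting $U$ leaves no $p$-edges at all. Multiplying every entry with an end in $U$ by a constant is, in this example, a uniform rescaling of $p$, so every ratio $d_{w,J}(i)/\mu_w(J)$ is unchanged. The triangle argument presupposes the false near-completeness. Damping lowers $\mu_w(J)$ as well as $d_{w,J}(i)$, so you must prove the ratio drops, and that needs quantitative control of the damped set.

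The paper gets this, in the regime $2-C<\kappa$, from a case split on $U=\{i\in J:d_{p,J}(i)>(1-\theta)\mu_p(J)\}$. If $c(U)<1/4$, the $p$-mass incident to $U$ is less than $C/2$. So multiplying those entries by $1-\alpha$ lowers $d_{w,J}(i)$, $i\in U$, by about $\alpha C$, but lowers $\mu_w(J)$ by less than $\alpha C/2$. If $c(U)\ge1/4$, the paper switches colour. Every $i\in U$ has $\sum_jc_jq_{ij}\le2-d_{p,J}(i)$ small, while $R_p<2/(1-\theta)$ forces $R_q\ge2-5\theta-\eta$. Hence $\mu_q(W)>15/8$ on $W=I\setminus U$ with $c(W)\le7/4$. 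This forces a non-bipartite component $L$ of $G_q[W]$ with $\mu_q(L)\ge c(L)+1/8$, and the margin then follows from $d_{q,L}(i)\le c(L)$.

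One can also stay in colour $p$ by damping both ends: $w_{ij}=\lambda_i\lambda_jp_{ij}$ with $\lambda_i=1-\alpha$ on $U$ and $\lambda_i=1$ elsewhere. The first-order gain in $\mu_w(J)-d_{w,J}(i)$ for $i\in U$ is then at least about $\alpha(2-3c(U))$. The bound $c(U)\le2-\sqrt2+O(\theta+\kappa^2+\eta)$ follows from $\mu_p(J)\ge2-\kappa^2-\eta/2$ and the definition of $U$. Your proposal contains neither this estimate nor the colour switch.
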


\begin{proof}
Whenever a matrix is restricted to a subset of $I$, we extend the restriction by zero outside that subset.
Since $R_p+R_q=4-\Delta(p,q)$, an exchange of $p$ and $q$ permits us to assume
\begin{equation}\label{eq:global-mass-large}
R_p\geq2-\frac{\Delta(p,q)}2.
\end{equation}
Choose $\eta\leq1/100$ so small that $40\sqrt\eta<\delta$. Lemma \ref{lem:template-near-equality} shows that $G_p$ has a connected non-bipartite component $J$ of capacity greater than one. Write
\[
C=c(J)=1+s,
\quad
\omega=2-C=1-s,
\quad \text{and} \quad
\mu=\mu_p(J).
\]
There are no positive $p$-weights between $J$ and its complement, whose total $p$-mass is at most $\omega^2$. It follows from \eqref{eq:global-mass-large} that
\begin{equation}\label{eq:component-mass-lower}
\mu\geq2-\omega^2-\frac{\Delta(p,q)}2
=C+s\omega-\frac{\Delta(p,q)}2.
\end{equation}

We first record that $s$ is bounded away from zero in terms of $\delta$. Compare the template with the split whose two parts are $J$ and $I\setminus J$, with $p$ as the internal color. The internal $p$-defect is at most
\[
C^2+\omega^2-R_p\leq2s^2+\frac{\Delta(p,q)}2,
\]
and the cross $q$-defect is at most $\Delta(p,q)$. Hence
\begin{equation}\label{eq:s-lower-preparation}
\spl(p,q)\leq s+2s^2+\frac{3\Delta(p,q)}2.
\end{equation}
Choose $s_0=s_0(\delta)>0$ so that $s_0+2s_0^2<\delta/2$, and decrease $\eta$ so that $3\eta/2<\delta/2$. Equation \eqref{eq:s-lower-preparation} then implies
\begin{equation}\label{eq:s-positive}
s\geq s_0.
\end{equation}

Choose a constant $\theta>0$ such that
\[
13\theta<\frac1{16}
\quad \text{and} \quad
\log(2-2\theta)-\frac12>0.
\]
Next choose $\kappa>0$ sufficiently small in terms of $\theta$, and finally decrease $\eta$ whenever required below.

Suppose first that $\omega\geq\kappa$. Equations \eqref{eq:component-mass-lower} and \eqref{eq:s-positive} give
$\mu-C\geq s_0\kappa-\eta/2$.
We may assume that the right side is at least $\mu_0=s_0\kappa/2$. Since $d_{p,J}(i)\leq C$ and $\mu\leq C^2\leq4$, we have
\[
d_{p,J}(i)\leq \mu-\mu_0\leq\left(1-\frac{\mu_0}{4}\right)\mu.
\]
Moreover, Lemma \ref{lem:f-bound} and $\mu\geq C$ imply
\[
\Gamma(p,c;J)\geq\log C-\frac{C-1}{2}
\geq \min_{1+s_0\leq x\leq2}\left(\log x-\frac{x-1}{2}\right)>0.
\]
Thus $w=p[J]$ has both required margins in this case.

We may now suppose that $\omega<\kappa$. Equation \eqref{eq:component-mass-lower} gives
\begin{equation}\label{eq:mass-near-two}
\mu\geq2-\kappa^2-\frac\eta2.
\end{equation}
Let
\[
U=\{i\in J:d_{p,J}(i)>(1-\theta)\mu\}.
\]
If $U$ is empty, take $w=p[J]$. Equation \eqref{eq:capacity-margin} holds with $\sigma=\theta$, and Lemma \ref{lem:f-bound} gives
\[
\Gamma(p,c;J)\geq\log\left(2-\kappa^2-\frac\eta2\right)-\frac12>0
\]
by the choices of the parameters.

Suppose next that $U$ is nonempty and $c(U)<1/4$. Let $\alpha=\theta/4$ and multiply every $p$-weight having at least one end in $U$ by $1-\alpha$. Denote the resulting matrix on $J$ by $w$. Let $M_U$ be the ordered $p$-mass of pairs having at least one end in $U$. Symmetry gives
\begin{equation}\label{eq:incident-mass}
M_U\leq2\sum_{i\in U}c_i d_{p,J}(i)\leq2c(U)C<\frac C2.
\end{equation}
Thus
\begin{equation}\label{eq:scaled-mass}
\mu_w(J)=\mu-\alpha M_U\geq\mu-\frac{\alpha C}{2}.
\end{equation}
If $i\in U$, then
$d_{w,J}(i)=(1-\alpha)d_{p,J}(i)$, and
$d_{p,J}(i)\le C$ together with \eqref{eq:incident-mass} yields
\begin{align*}
\mu_w(J)-d_{w,J}(i)
&=\mu-d_{p,J}(i)-\alpha\bigl(M_U-d_{p,J}(i)\bigr)\\
&\geq\mu-C+\frac{\alpha C}{2}
\geq-\frac\eta2+\frac\alpha2.
\end{align*}
If $i\notin U$, then
\[
\mu_w(J)-d_{w,J}(i)
\geq\theta\mu-\frac{\alpha C}{2}.
\]
After decreasing $\eta$ and using \eqref{eq:mass-near-two}, both differences are at least $\beta_0=\theta/16$. Since $\mu_w(J)\leq4$, condition \eqref{eq:capacity-margin} follows with $\sigma=\beta_0/4$. Equations \eqref{eq:scaled-mass} and \eqref{eq:gamma-lower} give
\[
\Gamma(w,c;J)\geq
\log\left(2-\kappa^2-\frac\eta2-\alpha\right)-\frac12>0.
\]
Every positive entry of $w$ is at least $(1-\alpha)$ times the corresponding entry of $p$.

It remains to consider $c(U)\geq1/4$. We use the other color. For $i\in U$, the fact that $J$ is a component of $G_p$ and $p_{ij}+q_{ij}\leq1$ gives
\[
\sum_{j\in I}c_jq_{ij}\leq2-d_{p,J}(i)
<2\theta+\kappa^2+\frac\eta2=:\xi.
\]
As $U$ is nonempty, $\mu<C/(1-\theta)$. The $p$-mass outside $J$ is at most $\omega^2$. Since $0\leq\omega<\kappa<1$,
\[
R_p<\frac{2-\omega}{1-\theta}+\omega^2
\leq\frac{2}{1-\theta}\leq2+5\theta,
\]
where the last inequality follows from the choice of $\theta$. Consequently,
\[
R_q=4-\Delta(p,q)-R_p\geq2-5\theta-\eta.
\]
The ordered $q$-mass of pairs with an end in $U$ is at most $2\sum_{i\in U}c_i\sum_jc_jq_{ij}\leq4\xi$. If $W=I\setminus U$ and $\mu_q(W)=\sum_{i,j\in W}c_ic_jq_{ij}$, then
\begin{equation}\label{eq:qW-mass}
\mu_q(W)>2-13\theta-4\kappa^2-3\eta>\frac{15}{8}.
\end{equation}
Also $c(W)\leq7/4$. Every bipartite component of $G_q[W]$, and every non-bipartite component of capacity at most one, has mass at most its capacity. Hence \eqref{eq:qW-mass} forces a non-bipartite component $L$ with
\begin{equation}\label{eq:q-component-surplus}
c(L)>1
\quad \text{and} \quad
\mu_q(L)\geq c(L)+\frac18.
\end{equation}
Indeed, there is at most one component of capacity greater than one, and the other components together have mass at most their total capacity. Since $c(L)\leq7/4$, we have
\[
d_{q,L}(i)\leq c(L)\leq\mu_q(L)-\frac18
\leq\left(1-\frac1{32}\right)\mu_q(L).
\]
Lemma \ref{lem:f-bound} and \eqref{eq:q-component-surplus} imply
\[
\Gamma(q,c;L)\geq
\log\left(c(L)+\frac18\right)-\frac{c(L)-1}{2}
\geq\log\frac98>0.
\]
The second inequality follows since $1<c(L)\leq7/4$ and the function $h(x)=\log(x+1/8)-(x-1)/2$ is increasing on $[1,7/4]$. In this case, take $J=L$ and let $w$ be the restriction of $q$ to $L\times L$, extended by zero outside $L\times L$. Under the final hypothesis of the lemma, every positive entry of $w$ is at least $\tau$.

Taking the minimum of the positive margins obtained in the four cases defines $\sigma$ and $\gamma$. We may take $a_0=1-\theta/4$. The dependence of all choices is only on $\delta$.
\end{proof}

\section{Colorings far from the split construction}
\subsection{Counting index words}
Fix $J$ and $w$ as in Lemma \ref{lem:robust-core}. In this subsection, write
\[
d_i=d_{w,J}(i),
\quad
\mu=\mu_w(J),
\quad
y_{ij}=\frac{c_ic_jw_{ij}}{\mu},
\quad \text{and} \quad
x_i=\frac{c_id_i}{\mu}.
\]
Here $y_{ij}$ is the desired proportion of steps from cluster $i$ to
cluster $j$, while $x_i$ is the desired proportion of visits to cluster
$i$.
Consequently,
\begin{equation}\label{eq:marginals}
\sum_{i,j\in J}y_{ij}=1,
\quad
\sum_{j\in J}y_{ij}=x_i,
\quad \text{and} \quad
\sum_{i\in J}x_i=1.
\end{equation}
Thus $y=(y_{ij})$ describes a balanced flow whose vertex marginals are
given by $x=(x_i)$. All sums and products involving $\log w_{ij}$ are restricted to pairs for which $w_{ij}>0$.

The following formula for Euler tours is commonly known as the BEST theorem \cite{AardenneBruijn,SmithTutte}. Its general directed form was proved by van Aardenne-Ehrenfest and de Bruijn, while Smith and Tutte had earlier obtained a special case. We use the version in which the first arc is prescribed. For a directed multigraph $\mathcal D$ and a vertex $v$, let $d_{\mathcal D}^{+}(v)$ denote the out-degree of $v$ in $\mathcal D$.

\begin{theorem}[van Aardenne-Ehrenfest and de Bruijn \cite{AardenneBruijn}]\label{thm:best}
Let $\mathcal D$ be a finite strongly connected Eulerian directed multigraph whose parallel arcs are distinguished. Fix a vertex $r$ and an arc $e_0$ leaving $r$. Let $t_r(\mathcal D)$ denote the number of spanning in-arborescences rooted at $r$, where an in-arborescence rooted at $r$ is a spanning directed tree in which every vertex has a directed path to $r$. Euler tours are counted as linear sequences of distinguished arcs. Then the number of Euler tours of $\mathcal D$ beginning with $e_0$ is
\[
t_r(\mathcal D)\prod_{v\in V(\mathcal D)}
\bigl(d_{\mathcal D}^+(v)-1\bigr)!.
\]
\end{theorem}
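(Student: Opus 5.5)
We prove the count by a bijection: Euler tours beginning with $e_0$ correspond to pairs consisting of a spanning in-arborescence rooted at $r$ and, at each vertex, an ordering of the out-arcs not already fixed by the tree or by $e_0$. Since parallel arcs are distinguished, these orderings number exactly $\prod_v\bigl(d^+_{\mathcal D}(v)-1\bigr)!$. Throughout we use that $\mathcal D$ is Eulerian, so $d^+(v)=d^-(v)$ for every $v$, and that an Euler tour starting at $r$ is closed and ends at $r$.

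\emph{From tours to data.} Fix an Euler tour $\mathcal E$ beginning with $e_0$. For each $v\neq r$, let $\mathrm{last}(v)$ be the last arc of $\mathcal E$ leaving $v$, and let $T$ be the set of these arcs. Each $v\neq r$ has exactly one out-arc in $T$, and $r$ has none.

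To see that $T$ has no directed cycle, let $\mathrm{last}(v)$ go from $v$ to $u$ with $u\neq r$. After traversing this arc the tour sits at $u$ and cannot end there, so it leaves $u$ again later. Hence $\mathrm{last}(u)$ occurs strictly after $\mathrm{last}(v)$. Following $T$-arcs therefore strictly increases position in $\mathcal E$, so every $T$-path terminates, necessarily at $r$. Thus $T$ is a spanning in-arborescence rooted at $r$.

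For the orderings, at $v\neq r$ record the order in which $\mathcal E$ uses the $d^+(v)-1$ out-arcs other than $\mathrm{last}(v)$. At $r$ the first out-arc used is $e_0$, so record the order of the remaining $d^+(r)-1$ out-arcs.

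\emph{From data to tours.} Conversely, given $T$ and such orderings, build a walk greedily. Start with $e_0$. At each arrival at a vertex $v$, leave along the next unused out-arc in the prescribed order, using the $T$-arc at $v\neq r$ only after all other out-arcs of $v$. Continue until the walk is stuck.

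Because in-degree equals out-degree, the walk can only get stuck at $r$. When it does, every out-arc of $r$ has been used. This walk reproduces the data, and conversely the tour $\mathcal E$ reproduces its own data, so the two maps are mutually inverse once we show the greedy walk is an Euler tour.

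\emph{Main obstacle: the greedy walk uses every arc.} Suppose some arc is unused, and let $v$ be a vertex with an unused out-arc. Then the $T$-arc of $v$, say from $v$ to $u$, is itself unused, since it is scheduled last at $v$.

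We claim $u$ also has an unused out-arc. Each departure from $u$ follows an arrival at $u$, except the initial departure when $u=r$; and at $r$ the walk also ends with one extra arrival. In either case the number of used out-arcs of $u$ is at most the number of used in-arcs of $u$. An in-arc of $u$, namely $v\to u$, is unused, so fewer than $d^-(u)=d^+(u)$ in-arcs were used, and hence some out-arc of $u$ is unused.

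Iterating along $T$ eventually reaches $r$ with an unused out-arc. This contradicts the fact that the walk stops at $r$ only once all its out-arcs are used. Hence the walk is an Euler tour beginning with $e_0$, the correspondence is a bijection, and the number of such tours is $t_r(\mathcal D)\prod_{v}\bigl(d^+_{\mathcal D}(v)-1\bigr)!$.
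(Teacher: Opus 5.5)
The paper gives no proof of this theorem and simply cites van Aardenne-Ehrenfest and de Bruijn. Your argument is the classical last-exit bijection underlying the standard proof of the BEST theorem: it matches Euler tours beginning with $e_0$ to pairs consisting of an in-arborescence rooted at $r$ and orderings of the remaining out-arcs at each vertex. It is correct as written, including the degenerate situations: the vertex with an unused out-arc may already be $r$, loops can never be last exits from $v\neq r$, and the digraph may have a single vertex.
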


\begin{lemma}\label{lem:euler-words}
For every sufficiently large odd integer $k$, there is a family $\mathcal I_k$ of closed index words $\boldsymbol{i}=(i_1,\ldots,i_k)$ in $J$ with the following properties. Consecutive indices, including $i_k,i_1$, form edges of $G_w$. There are integers $n_{ij}$ and $b_i$ such that every word has exactly $n_{ij}$ transitions from $i$ to $j$ and contains $i$ exactly $b_i$ times, where
\begin{equation}\label{eq:transition-asymptotics}
n_{ij}=ky_{ij}+O(1),
\quad
b_i=kx_i+O(1).
\end{equation}
Moreover,
\begin{equation}\label{eq:word-count}
|\mathcal I_k|\geq
\frac{\prod_{i\in J}(b_i-1)!}{\prod_{i,j\in J}n_{ij}!}
\geq \exp\bigl(k(H(y)-H(x))-o(k)\bigr),
\end{equation}
where $H(x)=-\sum_i x_i\log x_i$ and $H(y)=-\sum_{i,j}y_{ij}\log y_{ij}$.

If the positive integers $M_i$ satisfy $M_i=c_i k+O(1)$ and $x_i\leq(1-\sigma)c_i$ for every $i\in J$, define
\begin{equation}\label{eq:ideal-word-weight}
\mathcal W(\boldsymbol{i})=
\prod_{t=1}^k w_{i_ti_{t+1}}
\prod_{i\in J}(M_i)_{b_i},
\quad i_{k+1}=i_1.
\end{equation}
Then
\begin{equation}\label{eq:ideal-total}
\sum_{\boldsymbol{i}\in\mathcal I_k}\mathcal W(\boldsymbol{i})
\geq k!\exp\bigl(k\Gamma(w,c;J)-o(k)\bigr).
\end{equation}
\end{lemma}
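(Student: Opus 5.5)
Our plan has three stages: round the flow $y$ to an integer Eulerian multigraph, count its Euler tours with Theorem~\ref{thm:best}, and evaluate the result with Stirling's formula.

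\emph{Rounding.} First we round $ky$ to an integer circulation with total $k$. Since $y$ is a balanced flow by \eqref{eq:marginals}, its support lies on the arcs of $G_w[J]$ in both directions. Symmetry of $w$ gives $y_{ij}=y_{ji}$. We take $n_{ij}=n_{ji}=\lfloor ky_{ij}\rfloor$ or nearby values, so that every vertex is balanced automatically, since $n_{ij}=n_{ji}$ makes in-degree equal out-degree.

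The rounded total is even, and we must reach the odd value $k$ while keeping in-degree equal to out-degree. To do this we use an odd cycle of $G_w[J]$, which exists because $G_w[J]$ is non-bipartite. Adding one oriented copy of this cycle, plus pairs $ij,ji$ along edges to adjust the total by $2$, corrects the number of arcs to exactly $k$ at an $O(1)$ cost per entry. We also keep $n_{ij}\geq1$ on every edge of $G_w[J]$ with $w_{ij}>0$, again at $O(1)$ cost. This makes the multigraph $\mathcal D$ strongly connected, since $G_w[J]$ is connected. Then $b_i=\sum_j n_{ij}=kx_i+O(1)$, which gives \eqref{eq:transition-asymptotics}.

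\emph{Counting words.} We fix a root $r$ and a first arc $e_0$ and apply Theorem~\ref{thm:best}. We use $t_r(\mathcal D)\geq1$. Each Euler tour with distinguished arcs reads off a closed word of length $k$. Undistinguishing the parallel arcs divides by at most $\prod n_{ij}!$. Different tours starting with $e_0$ may still give the same word, but this overcount is exactly the one removed by dividing by $\prod n_{ij}!$. This gives the first inequality in \eqref{eq:word-count}, and we take $\mathcal I_k$ to be the resulting set of words.

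For the second inequality we apply Stirling's formula. The number of positive entries is bounded, since $J$ is fixed, and each entry carries $O(1)$ error, so
\[
\log\prod(b_i-1)!-\log\prod n_{ij}!=\sum b_i\log b_i-\sum n_{ij}\log n_{ij}+O(\log k).
\]
Substituting \eqref{eq:transition-asymptotics}, the $k\log k$ terms cancel because $\sum b_i=\sum n_{ij}=k$. The remainder equals $k(H(y)-H(x))+o(k)$, using continuity of $z\log z$ to absorb the $O(1)$ perturbations.

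\emph{Weighted sum.} Every word in $\mathcal I_k$ has the same weight:
\[
\mathcal W=\prod_{i,j} w_{ij}^{n_{ij}}\prod_i (M_i)_{b_i}.
\]
The hypothesis $x_i\leq(1-\sigma)c_i$ gives $M_i-b_i\geq\sigma c_i k-O(1)$, which is linear in $k$. Stirling's formula therefore gives
\[
\log(M_i)_{b_i}=M_i\log M_i-(M_i-b_i)\log(M_i-b_i)-b_i+O(\log k).
\]
Writing this with $M_i=c_ik$ and $b_i=x_ik$, the $k\log k$ part is $b_i\log k$, which sums to $k\log k$. Combined with $-k$, this produces $\log k!+o(k)$. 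The linear remainder per cluster is
\[
k\bigl[c_i\log c_i-(c_i-x_i)\log(c_i-x_i)\bigr].
\]
Adding $\sum n_{ij}\log w_{ij}=k\sum y_{ij}\log w_{ij}+O(1)$ and $k(H(y)-H(x))$, we substitute $y_{ij}=c_ic_jw_{ij}/\mu$ and $x_i=c_id_i/\mu$ into the expression
\[
-\sum y_{ij}\log y_{ij}+\sum y_{ij}\log w_{ij}=\log\mu-2\sum_i x_i\log c_i,
\]
and into $\sum x_i\log x_i$. Writing $z_i=d_i/\mu=x_i/c_i$, the $\log c_i$ terms cancel exactly. The total is
\[
\log\mu+\sum_i c_i\bigl[z_i\log z_i-(1-z_i)\log(1-z_i)\bigr]=\Gamma(w,c;J).
\]
This yields \eqref{eq:ideal-total}.

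\emph{Main obstacle.} We expect the main difficulty to be this final bookkeeping: verifying that the entropy terms combine into $\Gamma$ with exact cancellation, and that every $O(1)$ rounding error costs only $e^{o(k)}$. The latter is where the margin $\sigma$ is needed, because it keeps $M_i-b_i$ of linear size and makes $\log(M_i-b_i)$ stable under $O(1)$ changes. The parity fix in the rounding step is routine by comparison.
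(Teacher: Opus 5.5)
Your proposal is correct and follows the paper's proof essentially step for step. The steps match: symmetric rounding plus one oriented odd cycle to fix parity, where the paper rounds $ky_e-|F|/(2h)$ explicitly so the symmetric part totals $(k-|F|)/2$. Then Theorem~\ref{thm:best} with a prescribed first arc and $t_r\ge1$, and the same Stirling and entropy bookkeeping with the exact cancellation of the $\log c_i$ terms. One minor imprecision: a word arises from exactly $\prod_{i,j}n_{ij}!/n_{rs}$ labeled tours beginning with $e_0$ (with $s$ the head of $e_0$), not exactly $\prod_{i,j}n_{ij}!$. Since you only use this as an upper bound, the first inequality is unaffected.
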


\begin{proof}
Choose an odd cycle $F$ in the connected non-bipartite graph
$G_w[J]$, write $|F|$ for its length, and let $h$ be the number
of edges of $G_w[J]$. For an undirected edge $e=ij$, write
$y_e=y_{ij}$. Symmetry and \eqref{eq:marginals} give
$\sum_e y_e=1/2$. Define
\[
T_e=ky_e-\frac{|F|}{2h}.
\]
Since $k$ and $|F|$ are odd and $\sum_e y_e=1/2$, we have
$\sum_e T_e=(k-|F|)/2\in\mathbb Z$. Consequently,
\[
\frac{k-|F|}{2}-\sum_e\lfloor T_e\rfloor
=
\sum_e\bigl(T_e-\lfloor T_e\rfloor\bigr)
\]
is an integer in $\{0,\ldots,h-1\}$. Starting with the integers
$\lfloor T_e\rfloor$, increase by one exactly this many terms with
the largest fractional parts, and denote the resulting integers by
$a_e$. Since the support graph is finite and $y_e>0$ for every
edge $e$, we have $T_e\ge 1$ for every $e$ when $k$ is sufficiently
large. Hence every $a_e$ is a positive integer. By construction and
the definition of $T_e$, we have
\begin{equation}\label{eq:ae}
\sum_e a_e=\frac{k-|F|}{2}
\quad\text{and}\quad
a_e=ky_e+O(1).
\end{equation}

Construct a directed multigraph $\mathcal D_k$ on $J$. 
For every edge $e=ij$, include $a_e$ arcs in each direction between $i$ and $j$. 
Direct the fixed odd cycle $F$ cyclically and add one further arc along each of its edges. 
The number of arcs is $2\sum_ea_e+|F|=k$. Every vertex has equal in-degree and out-degree. The symmetric arcs make $\mathcal D_k$ strongly connected, so it is Eulerian.

Let $n_{ij}$ be the number of arcs from $i$ to $j$ and let $b_i$ be the out-degree of $i$. Equation \eqref{eq:ae} gives \eqref{eq:transition-asymptotics}. Label all parallel arcs. Fix a vertex $r_*$ and one labeled arc $e_*$ from $r_*$ to $s_*$. By Theorem \ref{thm:best}, the number of labeled Euler tours beginning with $e_*$ is
\[
t_{r_*}(\mathcal D_k)\prod_{i\in J}(b_i-1)!,
\]
where $t_{r_*}(\mathcal D_k)\geq1$ is the number of in-arborescences rooted at $r_*$. After the labels on parallel arcs are forgotten, each resulting closed index word corresponds to exactly
\[
(n_{r_*s_*}-1)!\prod_{(i,j)\neq(r_*,s_*)}n_{ij}!
=\frac{\prod_{i,j}n_{ij}!}{n_{r_*s_*}}
\]
labeled tours. Hence the first inequality in \eqref{eq:word-count} follows. Stirling's formula and \eqref{eq:transition-asymptotics} yield
\begin{align*}
\log|\mathcal I_k|
\geq \sum_i\log(b_i-1)!-\sum_{i,j}\log n_{ij}!
\geq k\sum_i x_i\log x_i-k\sum_{i,j}y_{ij}\log y_{ij}-o(k),
\end{align*}
which is the second inequality in \eqref{eq:word-count}.

All words in $\mathcal I_k$ have the same transition and occurrence counts. The positive weights are fixed, so the $O(1)$ transition-rounding errors contribute only $O(1)$ after logarithms are taken. Using \eqref{eq:ideal-word-weight} and \eqref{eq:word-count}, we obtain
\begin{align}\label{eq:ideal-log-start}
\log\sum_{\boldsymbol{i}\in\mathcal I_k}\mathcal W(\boldsymbol{i})
\geq{}
k(H(y)-H(x))+k\sum_{i,j}y_{ij}\log w_{ij} +\sum_i\log(M_i)_{b_i}-o(k).
\end{align}
The capacity margin implies $M_i-b_i$ is linear in $k$. Stirling's formula gives
\begin{equation}\label{eq:falling-entropy}
\log(M_i)_{b_i}
=kx_i\log k+k\bigl(c_i\log c_i-(c_i-x_i)\log(c_i-x_i)-x_i\bigr)+o(k).
\end{equation}
Also,
\begin{equation}\label{eq:entropy-identity}
H(y)+\sum_{i,j}y_{ij}\log w_{ij}
=-2\sum_i x_i\log c_i+\log\mu.
\end{equation}
Substitute \eqref{eq:falling-entropy} and \eqref{eq:entropy-identity} into \eqref{eq:ideal-log-start}. Writing $z_i=d_i/\mu$, so that $x_i=c_i z_i$, cancels all terms containing $\log c_i$. The coefficient of $k$ after $k\log k$ is
\[
-1+\log\mu+\sum_i c_i\bigl(z_i\log z_i-(1-z_i)\log(1-z_i)\bigr)
=-1+\Gamma(w,c;J).
\]
Since $\log k!=k\log k-k+o(k)$, inequality \eqref{eq:ideal-total} follows.
\end{proof}

\subsection{Embedding cycles through regular pairs}
To obtain the regular partition used in the far-from-split argument and to lift the closed index words constructed above to simple cycles, we recall the standard notion of an $\varepsilon$-regular pair and the equitable form of Szemerédi's regularity lemma \cite{Szemeredi,KomlosSimonovits}. Let $G$ be a graph. For disjoint nonempty sets $X,Y\subseteq V(G)$, let $e_G(X,Y)$ denote the number of edges joining $X$ and $Y$, and write $d_G(X,Y)=e_G(X,Y)/(|X||Y|)$. The pair $(X,Y)$ is $\varepsilon$-regular in $G$ if $|d_G(X',Y')-d_G(X,Y)|\leq\varepsilon$ whenever $X'\subseteq X$ and $Y'\subseteq Y$ satisfy $|X'|\geq\varepsilon|X|$ and $|Y'|\geq\varepsilon|Y|$. For $v\in V(G)$, let $N_G(v)$ denote the neighborhood of $v$ in $G$. When the underlying graph is clear, we omit the subscript $G$. A partition is equitable if the sizes of any two parts differ by at most one.

\begin{lemma}[Szemerédi \cite{Szemeredi}]\label{lem:regularity}
For every $\varepsilon>0$ and positive integer $L$, there exist positive integers $M_0=M_0(\varepsilon,L)$ and $n_0=n_0(\varepsilon,L)$ such that every graph $G$ with $|V(G)|\geq n_0$ has an equitable partition $V(G)=V_1\cup\cdots\cup V_M$, where $L\leq M\leq M_0$, for which all but at most $\varepsilon M^2$ unordered pairs $(V_i,V_j)$ with $1\leq i<j\leq M$ are $\varepsilon$-regular.
\end{lemma}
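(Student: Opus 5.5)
This is Szemerédi's regularity lemma in its standard equitable form, so I would prove it by the classical energy-increment (index) argument rather than anything specific to the present paper. For a partition $\mathcal P=\{V_1,\ldots,V_M\}$ of $V(G)$ with $|V(G)|=n$, define the index
\[
q(\mathcal P)=\sum_{1\le i<j\le M}\frac{|V_i||V_j|}{n^2}\,d(V_i,V_j)^2 .
\]
It always lies in $[0,1/2]$. By the Cauchy--Schwarz (defect) inequality it does not decrease under refinement. The plan is to start from an arbitrary equitable partition into $L$ parts (plus a small exceptional set that is later redistributed). Whenever more than $\varepsilon M^2$ pairs are irregular, we refine so that $q$ increases by at least $\varepsilon^5/4$, say, while the partition stays (almost) equitable. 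Since $q\le1/2$, this can happen at most $2\varepsilon^{-5}$ times, which bounds $M$ by a tower-type function $M_0(\varepsilon,L)$.

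The key steps, in order, are as follows. First, prove the defect form of Cauchy--Schwarz. If $(X,Y)$ is not $\varepsilon$-regular, witnessed by $X'\subseteq X$ and $Y'\subseteq Y$, then splitting $X$ into $\{X',X\setminus X'\}$ and $Y$ into $\{Y',Y\setminus Y'\}$ raises the contribution of this pair to the index by at least $\varepsilon^4|X||Y|/n^2$. Second, for the current equitable partition with more than $\varepsilon M^2$ irregular pairs, let each part $V_i$ take the common refinement (the Venn diagram) of all witness sets attached to it. This gives at most $2^{M-1}$ pieces per part, and the total index gain is at least $\varepsilon M^2\cdot\varepsilon^4/(2M^2)\cdot(1-o(1))\gtrsim\varepsilon^5/2$. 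Third, restore equitability. Chop every piece into blocks of a common size $\lfloor n/(M4^M)\rfloor$ and put the leftovers into an exceptional set, whose size grows by at most $n/2^M$ per step. Then check that this chopping loses only a negligible amount of index, which follows again from monotonicity under refinement applied to the blocks that are not leftovers.

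The step I expect to be the main obstacle is the bookkeeping in the third step. We must ensure that the exceptional set stays of size at most $\varepsilon n$ over all iterations, which requires $L$ large in terms of $\varepsilon$, e.g.\ $2^{-L}\cdot 2\varepsilon^{-5}\le\varepsilon/2$. At the end we distribute the exceptional vertices evenly among the parts to obtain an exactly equitable partition. This final redistribution changes each part by a proportion $O(\varepsilon)$, so regularity of a pair survives with $\varepsilon$ replaced by a constant multiple. We therefore run the whole argument with $\varepsilon/C$ in place of $\varepsilon$. The condition $|V(G)|\ge n_0$ only ensures that all parts are nonempty and that the rounding errors, which are of order $M_0/n$, are smaller than $\varepsilon$. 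Given these routine adjustments, the statement follows with $M_0$ equal to an iterated exponential of height $O(\varepsilon^{-5})$ starting from $L$.
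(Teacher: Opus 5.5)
The paper does not prove this lemma; it quotes it from Szemer\'edi and Koml\'os--Simonovits. Your energy-increment outline is the standard proof behind that citation. The following parts are all fine:
the index and its monotonicity under refinement;
the defect Cauchy--Schwarz gain of $\varepsilon^4|X||Y|/n^2$ per irregular pair;
the Venn-diagram refinement with at most $2^{M-1}$ pieces per part;
the chopping into equal blocks with leftovers sent to an exceptional set, which costs at most about $2^{-M}$ in index;
the tower-type bound on $M_0$.
The step that fails as written is the final redistribution of the exceptional set. The paper's formulation has no exceptional class, so you do have to perform it.

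The problem is your claim that adding an $O(\varepsilon)$-proportion of uncontrolled vertices to each part preserves regularity up to a constant factor. This is false. Suppose $(X,Y)$ is a random-like pair of density $1/2$, and you add to $X$ a set $Z$ of $\theta|X|$ vertices complete to $Y$. A set $\hat X'\supseteq Z$ of size $\varepsilon'|X\cup Z|$ then has density about $1/2+\theta/(2\varepsilon')$ to $Y$, whereas $X\cup Z$ has density about $1/2+\theta/2$. So the enlarged pair is $\varepsilon'$-regular only if $\varepsilon'$ is at least of order $\sqrt{\theta}$.

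Your choice makes both the exceptional set and the regularity parameter $\varepsilon/C$, so $\theta\approx\varepsilon/C$. At scale $\varepsilon'=\varepsilon$ the deviation is then about $1/(2C)$, which exceeds $\varepsilon$ once $\varepsilon$ is small. No absolute constant $C$ works.

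The correct estimate is as follows. If $(X,Y)$ is $\varepsilon_0$-regular and each side grows by at most $\theta$ times its size, then the enlarged pair is $\varepsilon$-regular provided $\varepsilon_0+\theta\le\varepsilon$ and $\varepsilon_0+2\theta+2\theta/\varepsilon\le\varepsilon$. So you need $\theta=O(\varepsilon^2)$.

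This is easy to arrange in your scheme. The exceptional set has size at most $L'+n\sum_s2^{-M_s}\le L'+2^{1-L'}n$, independently of the increment parameter. Starting from $L'=\max\{L,L_1(\varepsilon)\}$ parts, with $L_1$ large and $n\ge n_0$, makes it at most $\varepsilon^2 n/20$, while you keep the regularity parameter at $\varepsilon/3$. Alternatively, run the whole argument with $c\varepsilon^2$ in place of $\varepsilon/C$. With either change the proof goes through.
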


In a red/blue edge-coloring, a pair is $\varepsilon$-regular in red if and only if it is $\varepsilon$-regular in blue.

\begin{lemma}[see {\cite[Fact 1.3]{KomlosSimonovits}}]\label{lem:typical-vertex}
Let $(X,Y)$ be an $\varepsilon$-regular pair of density $P$, and let $Y'\subseteq Y$ satisfy $|Y'|\geq\varepsilon|Y|$. Apart from at most $\varepsilon|X|$ vertices $x\in X$, every vertex satisfies
\[
|N(x)\cap Y'|\geq(P-\varepsilon)|Y'|.
\]
A vertex satisfying this inequality is called typical with respect to $Y'$.
\end{lemma}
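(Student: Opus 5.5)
This is the standard counting argument, Fact 1.3 of Koml\'os--Simonovits, and the plan is a direct double count. Let $X_0\subseteq X$ be the set of vertices $x$ with $|N(x)\cap Y'|<(P-\varepsilon)|Y'|$; we must show $|X_0|\le\varepsilon|X|$. Suppose instead that $|X_0|>\varepsilon|X|$. Then the pair $(X_0,Y')$ satisfies both size conditions in the definition of $\varepsilon$-regularity, since $|X_0|\geq\varepsilon|X|$ and $|Y'|\geq\varepsilon|Y|$. Hence
\[
|d(X_0,Y')-P|\le\varepsilon .
\]

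Next we count edges. Summing the defining inequality over $x\in X_0$ gives
\[
e(X_0,Y')=\sum_{x\in X_0}|N(x)\cap Y'|<(P-\varepsilon)|X_0||Y'| .
\]
Therefore $d(X_0,Y')<P-\varepsilon$, which contradicts the regularity bound above. So $|X_0|\le\varepsilon|X|$, and every vertex of $X\setminus X_0$ is typical with respect to $Y'$ by definition.

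There is no real obstacle. The only care needed is at the boundary: the definition requires $|X'|\ge\varepsilon|X|$, so we assume the strict inequality $|X_0|>\varepsilon|X|$ for contradiction. The conclusion ``apart from at most $\varepsilon|X|$ vertices'' matches this exactly.

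If $P-\varepsilon\le0$, the inequality $|N(x)\cap Y'|\ge(P-\varepsilon)|Y'|$ holds trivially for every $x$, so $X_0$ is empty and the statement is immediate. The double count above also covers this case without change.
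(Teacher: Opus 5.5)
Your proof is correct: it is the standard double-counting argument behind Koml\'os--Simonovits Fact 1.3, which the paper cites without reproducing a proof. Your handling of the boundary case is also right. Assuming $|X_0|>\varepsilon|X|$ makes $X_0$ nonempty and large enough for regularity to apply, and the strict inequality $d(X_0,Y')<P-\varepsilon$ then gives the contradiction.
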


\begin{lemma}\label{lem:lifting}
Fix $\sigma,\tau,\nu>0$. There are $\varepsilon_0>0$ and positive
integers $k_0$ and $n_0$ such that the following holds whenever
$0<\varepsilon\le\varepsilon_0$, $k\ge k_0$, and $|V_i|\ge n_0$
for every $i\in[M]$. Let $G$ be a graph, and let $V_1,\ldots,V_M$ be pairwise disjoint subsets of $V(G)$. For every edge $ij$ of a fixed support graph on $[M]$, suppose that $(V_i,V_j)$ is an $\varepsilon$-regular pair in $G$, and write $P_{ij}=d_G(V_i,V_j)$. Assume that
$P_{ij}\geq w_{ij}\geq\tau$.
Let $\boldsymbol{i}=(i_1,\ldots,i_k)$ be a closed index word in the support graph, and let $b_i$ be the number of occurrences of $i$. If
\begin{equation}\label{eq:lifting-capacity}
b_i\leq(1-\sigma)|V_i|
\quad\text{for every }i,
\end{equation}
then the number of injective sequences $v_t\in V_{i_t}$ satisfying $v_tv_{t+1}\in E(G)$ for $1\leq t\leq k$, with $v_{k+1}=v_1$, is at least
\begin{equation}\label{eq:lifting-bound}
e^{-\nu k}
\prod_{t=1}^k w_{i_ti_{t+1}}
\prod_i(|V_i|)_{b_i}.
\end{equation}
\end{lemma}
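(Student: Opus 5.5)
We embed the vertices greedily along the word, one step at a time, and at each step keep track of which candidates remain usable for the future. This is the standard counting lemma for a path through regular pairs, but the rate has to be tracked multiplicatively so that the loss is only $e^{-\nu k}$ and not a constant-factor-per-step loss that depends on $\varepsilon$. Fix $\varepsilon_1=\varepsilon_1(\sigma,\tau,\nu)$ with $\log\bigl(1-\varepsilon_1/\tau\bigr)\ge-\nu/4$. Then choose $\varepsilon_0$ much smaller, and $k_0,n_0$ large.

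\textbf{Interior steps.} We choose $v_1,v_2,\ldots,v_k$ in order. For $2\le t\le k-1$, when $v_t$ is chosen we require $v_t$ to lie in $N(v_{t-1})\cap V_{i_t}$, minus the at most $b_{i_t}$ previously used vertices. We also require $v_t$ to be typical, in the sense of Lemma~\ref{lem:typical-vertex}, with respect to the set $Y'=V_{i_{t+1}}\setminus\{\text{used vertices}\}$. By \eqref{eq:lifting-capacity}, $|Y'|\ge\sigma|V_{i_{t+1}}|\ge\varepsilon|V_{i_{t+1}}|$. The set of available candidates for $v_t$ is $N(v_{t-1})\cap V_{i_t}$ minus the used vertices. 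Since $v_{t-1}$ was typical with respect to exactly this unused set, the candidate set has size at least $(P-\varepsilon)\cdot|V_{i_t}\setminus\text{used}|$. Lemma~\ref{lem:typical-vertex} removes at most $\varepsilon|V_{i_t}|\le(\varepsilon/\sigma)|V_{i_t}\setminus\text{used}|$ atypical vertices. Hence the number of choices for $v_t$ is at least
\[
\bigl(w_{i_{t-1}i_t}-\varepsilon-\varepsilon/\sigma\bigr)\,\bigl(|V_{i_t}|-u_t\bigr),
\]
where $u_t$ is the number of earlier occurrences of $i_t$. The first factor is at least $w(1-\varepsilon_1/\tau)$ once $\varepsilon_0\le\varepsilon_1\sigma/4$, using $w\ge\tau$. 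The second factor, multiplied over all $t$, assembles to $\prod_i(|V_i|)_{b_i}$.

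\textbf{Closing the cycle.} This is the main obstacle, because $v_k$ must be adjacent both to $v_{k-1}$ and to the fixed vertex $v_1$. I would handle it by choosing $v_1$ first, typical with respect to the unused part of both $V_{i_2}$ and $V_{i_k}$. We then run the greedy process but reserve the constraint at the end: we restrict $v_{k-1}$ to be typical with respect to $Y''=N(v_1)\cap V_{i_k}\setminus\text{used}$. This set has size at least $(\tau-\varepsilon)\sigma|V_{i_k}|\ge\varepsilon|V_{i_k}|$, because $v_1$ was typical with respect to the unused part of $V_{i_k}$, which shrinks only by at most $(1-\sigma)$. Tracking the unused part after $v_1$ is chosen requires a small argument, but keeping $v_1$ typical against a large reserved subset suffices. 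Then the number of choices for $v_k$ is at least $(w-\varepsilon)|Y''|$, roughly $w_{i_{k-1}i_k}\,w_{i_ki_1}\,|V_{i_k}\setminus\text{used}|$, which supplies the last two weight factors. The first vertex $v_1$ has at least $(1-2\varepsilon)|V_{i_1}|$ choices.

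\textbf{Assembling the bound.} Multiplying over all steps, the total count is at least
\[
\Bigl(1-\frac{\varepsilon_1}{\tau}\Bigr)^{k}\,(1-2\varepsilon)\,(\text{const})\prod_t w_{i_ti_{t+1}}\prod_i(|V_i|)_{b_i},
\]
where the constant absorbs the $O(1)$ boundary effects at $t=1$ and $t=k$. For $k\ge k_0$ this is at least $e^{-\nu k}$ times the target \eqref{eq:lifting-bound}. Injectivity holds by construction, since used vertices are always excluded. The dependence of the constants is only on $\sigma,\tau,\nu$, and $n_0$ is chosen so that all the sets in question are nonempty and the rounding is negligible.
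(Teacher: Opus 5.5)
\textbf{There is a genuine gap in your closing step.} You claim $|Y''|\ge(\tau-\varepsilon)\sigma|V_{i_k}|$ on the grounds that $v_1$ was typical with respect to the unused part of $V_{i_k}$, and that this part shrinks by at most a factor $1-\sigma$. But $v_1$ is chosen first, so it is typical only with respect to all of $V_{i_k}$. That gives $|N(v_1)\cap V_{i_k}|\ge(\tau-\varepsilon)|V_{i_k}|$. It says nothing about how $N(v_1)$ meets the set of vertices still unused at time $k-1$.

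The up to $b_{i_k}-1$ (nearly $(1-\sigma)|V_{i_k}|$) vertices of $V_{i_k}$ taken at intermediate positions are chosen after $v_1$. Your greedy rule does nothing to keep them out of $N(v_1)$. Whenever $\tau<1-\sigma$, they can use up all of $N(v_1)\cap V_{i_k}$. For such partial embeddings $Y''$ is empty, and then:
\begin{itemize}
\item Lemma~\ref{lem:typical-vertex} cannot even be applied to $Y''$;
\item the step-by-step lower bound fails, although it must hold for every partial embedding.
\end{itemize}
Neighbourhoods do not shrink proportionally with the unused set.

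Your remark about ``keeping $v_1$ typical against a large reserved subset'' points the right way, but it is not carried out, and as phrased it would not work either. Intermediate positions must avoid the reserved set. If that set has size comparable to $\tau|V_{i_k}|$, it removes a constant fraction (possibly all) of the available set at each of possibly $\Theta(k)$ visits to $i_k$. That is a loss $e^{-ck}$ with $c$ depending on $\sigma,\tau$ rather than on $\nu$.

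\textbf{The fix, which is what the paper does, is a \emph{small} buffer.}
\begin{itemize}
\item Immediately after choosing $v_1$, fix $B\subseteq N(v_1)\cap V_{i_k}$ with $|B|=\lfloor\beta|V_{i_k}|\rfloor$, where $\varepsilon\ll\beta\ll\sigma\tau\nu$. Forbid positions $2,\dots,k-1$ from using $B$.
\item A position with $i_t=i_k$ then loses at most an extra $\beta|V_{i_t}|\le(\beta/\sigma)|V_{i_t}\setminus\text{used}|$ candidates. The per-step factor becomes $w_{i_{t-1}i_t}\bigl(1-O((\beta+\varepsilon)/(\sigma\tau))\bigr)$, which is still at least $w_{i_{t-1}i_t}e^{-\nu/2}$.
\item Require $v_{k-1}$ to be typical with respect to $B$. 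This is legitimate because $B$ is fixed in advance and $|B|\ge\varepsilon|V_{i_k}|$.
\item Choose $v_k\in N(v_{k-1})\cap B$. Adjacency $v_kv_1$ and injectivity are then automatic.
\end{itemize}

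The last step then gives only about $\beta(w_{i_{k-1}i_k}-\varepsilon)|V_{i_k}|$ choices, not roughly $w_{i_{k-1}i_k}w_{i_ki_1}|V_{i_k}\setminus\text{used}|$ as you wrote. This is a fixed constant-factor loss, and the factor $w_{i_ki_1}\le 1$ can simply be dropped. It is absorbed into $e^{-\nu k/2}$ once $k\ge k_0$.

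The paper also reserves a second buffer $A_{\mathrm{buf}}\subseteq V_{i_{k-1}}$. With your device of making $v_{k-1}$ typical with respect to the reserved set, the single buffer $B$ suffices.
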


\begin{proof}
Replacing $\sigma$ by $\min\{\sigma,1\}$ does not weaken the assertion. Choose $\beta>0$ so small that $\beta<\min\{\sigma/4,\tau/4\}$ and $\beta/(\sigma\tau)<\min\{1/8,\nu/8\}$. Next choose $\varepsilon_0>0$ so small that every $0<\varepsilon\leq\varepsilon_0$ satisfies $\varepsilon<\beta/2$ and
\[
\rho:=\frac{\beta+2\varepsilon}{\sigma\tau}<\min\left\{\frac14,\frac\nu4\right\}.
\]
These choices are uniform over all words and clusters.

Select $v_1\in V_{i_1}$ that is typical with respect to both $V_{i_2}$ and $V_{i_k}$. There are at least $(1-2\varepsilon)|V_{i_1}|$ choices. For every such $v_1$, typicality with respect to $V_{i_k}$ and the inequalities $P_{i_ki_1}\geq w_{i_ki_1}\geq\tau$ imply
\[
|N_G(v_1)\cap V_{i_k}|
\geq(\tau-\varepsilon)|V_{i_k}|.
\]
Since $\varepsilon<\beta/2$ and $\beta<\tau/4$, this quantity is greater than $\beta|V_{i_k}|$. Thus one may choose
$B_{\mathrm{buf}}\subseteq N(v_1)\cap V_{i_k}$ with
$|B_{\mathrm{buf}}|=\lfloor\beta|V_{i_k}|\rfloor$.
Lemma \ref{lem:typical-vertex}, applied to the pair $(V_{i_{k-1}},V_{i_k})$ and the set $B_{\mathrm{buf}}$, allows us to choose
$A_{\mathrm{buf}}\subseteq V_{i_{k-1}}$ with 
$|A_{\mathrm{buf}}|=\lfloor\beta|V_{i_{k-1}}|\rfloor$,
so that every vertex of $A_{\mathrm{buf}}$ has at least $(P_{i_{k-1}i_k}-\varepsilon)|B_{\mathrm{buf}}|$ neighbors in $B_{\mathrm{buf}}$. If $i_{k-1}=i_1$, choose $A_{\mathrm{buf}}$ to avoid $v_1$. None of the positions from $2$ through $k-2$ will use either buffer.

Before position $t$, let
$R_t^*=V_{i_t}\setminus\{v_s:1\leq s<t,\ i_s=i_t\}$. Condition \eqref{eq:lifting-capacity} gives $|R_t^*|\geq\sigma|V_{i_t}|$. For $2\leq t\leq k-2$, let $R_t$ be the unused vertices of $V_{i_t}$ after the relevant buffer has also been excluded. Then
\begin{equation}\label{eq:available-set}
|R_t|\geq |R_t^*|-\beta|V_{i_t}|
\geq\left(1-\frac\beta\sigma\right)|R_t^*|
\geq\frac\sigma2|V_{i_t}|.
\end{equation}

We choose the vertices in order. At position $2$, the typicality of $v_1$ with respect to $V_{i_2}$ leaves at least $(P_{i_1i_2}-\varepsilon)|V_{i_2}|-\beta|V_{i_2}|$ neighbors after the buffer is removed. Discard at most $\varepsilon|V_{i_2}|$ vertices that are not typical with respect to the next available set. Since $R_2^*=V_{i_2}$, the number of choices for $v_2$ is at least $(1-\rho)w_{i_1i_2}|R_2^*|$.
The chosen vertex has at least $(P_{i_2i_3}-\varepsilon)|R_3|$ neighbors in $R_3$.

For $3\leq t\leq k-2$, assume that $v_{t-1}$ has at least
$(P_{i_{t-1}i_t}-\varepsilon)|R_t|$ neighbors in $R_t$.
We choose $v_t$ from these neighbors, retaining only those that are
typical with respect to the available set for position $t+1$.
This set is $R_{t+1}$ when $t<k-2$ and $A_{\mathrm{buf}}$ when
$t=k-2$.
Because $i_t\ne i_{t+1}$, the next available set is determined
before $v_t$ is chosen.
In either case, its size is at least $\varepsilon|V_{i_{t+1}}|$,
so by Lemma \ref{lem:typical-vertex}, at most
$\varepsilon|V_{i_t}|$ candidates are excluded. Equation \eqref{eq:available-set} gives at least
\begin{align*}
(P_{i_{t-1}i_t}-\varepsilon)|R_t|-\varepsilon|V_{i_t}|
\geq w_{i_{t-1}i_t}|R_t^*|-(\beta+2\varepsilon)|V_{i_t}|
\geq(1-\rho)w_{i_{t-1}i_t}|R_t^*|
\end{align*}
choices. The selected vertex is typical with respect to the next set, so the induction continues.

At position $k-1$, choose a neighbor of $v_{k-2}$ in $A_{\mathrm{buf}}$. For sufficiently large clusters and $\varepsilon\leq\tau/2$, there are at least
\[
\frac\beta4 w_{i_{k-2}i_{k-1}}|R_{k-1}^*|
\]
choices. Every chosen vertex has at least $(P_{i_{k-1}i_k}-\varepsilon)|B_{\mathrm{buf}}|$ neighbors in $B_{\mathrm{buf}}$, so there are at least
\[
\frac\beta4 w_{i_{k-1}i_k}|R_k^*|
\]
choices for $v_k$. The edge $v_kv_1$ is present because $B_{\mathrm{buf}}\subseteq N(v_1)$. Since $w_{i_ki_1}\leq1$, including this final weight only decreases the desired lower bound.

Finally,
\[
|V_{i_1}|\prod_{t=2}^k|R_t^*|=\prod_i(|V_i|)_{b_i}.
\]
Multiplying all estimates gives at least
\[
\frac12\left(\frac\beta4\right)^2(1-\rho)^{k-3}
\prod_{t=1}^k w_{i_ti_{t+1}}
\prod_i(|V_i|)_{b_i}
\]
sequences. Since $\rho<\min\{1/4,\nu/4\}$, the inequality $\log(1-\rho)\geq-2\rho$ gives $(1-\rho)^k\geq e^{-\nu k/2}$. The fixed prefactor is at least $e^{-\nu k/2}$ for sufficiently large $k$. This proves \eqref{eq:lifting-bound}.
\end{proof}

\subsection{The far-from-split bound}
We now combine the preceding results to establish the following lower bound for colorings that are not $\lambda$-near-split.
\begin{prop}\label{prop:far-split}
For every $\lambda>0$ and every fixed nonnegative integer $\ell$, there are constants $g=g(\lambda,\ell)>0$ and $k_1=k_1(\lambda,\ell)$ 
such that every odd $k\geq k_1$ and every red/blue edge-coloring $\chi$ of $K_{2k-1+\ell}$ that is not $\lambda$-near-split satisfies
\begin{equation}\label{eq:far-split-gain}
\Nmono(C_k,\chi)\geq e^{gk}H_m>H_m.
\end{equation}
\end{prop}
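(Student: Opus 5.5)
We combine the colored regularity lemma with Lemmas~\ref{lem:robust-core}, \ref{lem:euler-words} and \ref{lem:lifting}. Given $\lambda$ and $\ell$, we first fix $\delta>0$ small in terms of $\lambda$; choosing $\delta\le\lambda/10$ should suffice. Lemma~\ref{lem:robust-core} then supplies $\eta,\gamma,\sigma,a_0$. We fix a density threshold $\tau\le\eta/10$. Next we choose $\nu=\gamma/4$, and Lemma~\ref{lem:lifting} applied with $\sigma/2$, $a_0\tau$ and $\nu$ gives $\varepsilon_0$. Finally we take $\varepsilon\le\min\{\varepsilon_0,\eta/10,\tau\}$ and $L\ge 10/\eta$.

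We apply Lemma~\ref{lem:regularity} to the red graph of $\chi$ on $N=2k-1+\ell$ vertices. This gives an equitable partition $V_1,\dots,V_M$ with $L\le M\le M_0$. For $k$ large, every cluster has size at least $n_0$.

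Set $c_i=2|V_i|/N$. On each $\varepsilon$-regular pair $ij$ with $i\neq j$, let $p_{ij}$ be the red density if it is at least $\tau$ and $0$ otherwise; define $q_{ij}$ from the blue density in the same way. All other entries are set to $0$. The omitted mass $\Delta(p,q)$ comes from three sources:
\begin{itemize}
\item the diagonal, contributing $O(1/M)$;
\item irregular pairs, contributing $O(\varepsilon)$;
\item sub-threshold densities, contributing at most $2\tau\cdot 4$.
\end{itemize}
Hence $\Delta(p,q)\le\eta$.

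Next we show $\spl(p,q)\ge\delta$. Suppose instead that $D_{p,q}(X)<\delta$ for some $X$ (or the same with colors exchanged). Take $A=\bigcup_{i\in X}V_i$ and $B$ the union of the remaining clusters. Then $|A|,|B|\ge(1/2-\delta)N$. The red defect inside $A$ and $B$ is controlled by the internal $p$-defect together with the diagonal and irregular pairs, since $p_{ij}$ never exceeds the true density. The blue defect across the cut is controlled similarly. Dividing by the part sizes, which are about $N^2/4$, multiplies the defects by a bounded constant. So $\chi$ would be $O(\delta+\eta)$-near-split, and choosing constants so that this is at most $\lambda$ contradicts the hypothesis. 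This bookkeeping is routine but must be done carefully.

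Lemma~\ref{lem:robust-core} now gives $J$ and $w$ in a single color, say red. The matrix $w$ satisfies $w\le p$, so $w_{ij}\le P_{ij}$ on the support. Its positive entries are at least $a_0\tau$, $G_w[J]$ is connected and non-bipartite, the capacity margin \eqref{eq:capacity-margin} holds, and $\Gamma(w,c;J)\ge\gamma$.

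Apply Lemma~\ref{lem:euler-words} with $M_i=|V_i|=c_iN/2=c_ik+O(c_i\ell)$, which is $c_ik+O(1)$. The margin gives $x_i=c_id_i/\mu\le(1-\sigma)c_i$, hence $b_i=kx_i+O(1)\le(1-\sigma/2)|V_i|$. Thus \eqref{eq:lifting-capacity} holds with $\sigma/2$.

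For each word $\boldsymbol i\in\mathcal I_k$, Lemma~\ref{lem:lifting} produces at least $e^{-\nu k}\mathcal W(\boldsymbol i)$ injective closed red sequences. Summing over words and using \eqref{eq:ideal-total}, the number of such sequences is at least
\[
e^{-\nu k}k!\,e^{k\gamma-o(k)}.
\]
Sequences arising from different words are distinct, because a sequence determines its word through cluster membership. Each red copy of $C_k$ arises from at most $2k$ sequences, one for each starting point and direction. Hence
\[
\Nmono(C_k,\chi)\ge\frac{k!}{2k}\,e^{(\gamma-\nu)k-o(k)}=H_m\,e^{3\gamma k/4-o(k)}.
\]
Taking $g=\gamma/2$ and $k$ large gives \eqref{eq:far-split-gain}.

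I expect the main obstacle to be the transfer step showing $\spl(p,q)<\delta$ would force $\chi$ to be $\lambda$-near-split. This requires matching the normalization $c_i=2|V_i|/N$ and the ordered-pair sums in $D_{p,q}$ to the actual densities inside $A$, inside $B$ and across the cut, while absorbing the diagonal, irregular and thresholded mass. I would handle it by bounding each actual missing edge count by $(N/2)^2$ times the corresponding ordered template defect plus $O((\varepsilon+\tau+1/M)N^2)$.
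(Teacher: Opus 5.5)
Your proposal is correct and follows essentially the same route as the paper: the regularity lemma on the red graph, the thresholded template with $\Delta(p,q)\le\eta$ and $\spl(p,q)\ge\delta$, then Lemma~\ref{lem:robust-core}, Lemma~\ref{lem:euler-words} with $M_i=|V_i|$, Lemma~\ref{lem:lifting} with $\sigma/2$, $a_0\tau$, $\nu=\gamma/4$, and division by $2k$. The only slack is in the constants: your explicit choices $\varepsilon,\tau\le\eta/10$ and $L\ge 10/\eta$ give only $\Delta(p,q)$ of about $2\eta$, so shrink $\varepsilon,\tau$ and enlarge $L$ a little.

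The transfer step you flag as the main obstacle is in fact immediate. $D_{p,q}(X)$ already charges diagonal, irregular and sub-threshold pairs with weight $1-p_{ij}$, which is at least the true missing red density. This gives $|A|^2-2e_R(A)<\delta N^2/4$ and $e_R(A,B)<\delta N^2/8$ directly, with no extra error terms.

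The one point you should state explicitly, as the paper does, is uniformity. The template depends on $\chi$ and hence on $k$, so the $o(k)$ terms and the threshold for $k$ in Lemma~\ref{lem:euler-words} must be uniform over all templates that arise. This holds because $M\le M_0$ keeps every $c_i$ bounded away from zero and every positive $w_{ij}$ is at least $a_0\tau$.
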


\begin{proof}
Fix $\lambda>0$ and $\ell$. Choose $0<\delta<1/2$ such that
$\delta/(1-\delta)^2<\lambda$. Apply Lemma \ref{lem:robust-core} with this $\delta$ and obtain $\eta,\gamma,\sigma,a_0$. Choose $0<\tau<1/2$ so that $4\tau<\eta/4$, and let $\nu=\gamma/4$. Apply Lemma \ref{lem:lifting} with parameters $\sigma/2$, $a_0\tau$, and $\nu$. Choose $\varepsilon>0$ below the resulting threshold and then choose $L$ sufficiently large that
\begin{equation}\label{eq:regularity-parameters}
20\varepsilon+4\tau+\frac5L<\eta.
\end{equation}
These choices depend only on $\lambda$ and precede the choice of $k$.

Apply Lemma \ref{lem:regularity} to the red graph and write the resulting equitable partition as $V_1\cup\cdots\cup V_M$. Let
\[
c_i=\frac{2|V_i|}{N},
\]
so that $\sum_i c_i=2$. For a regular pair $(V_i,V_j)$, let $p_{ij}$ equal its red density when that density is at least $\tau$, and let $p_{ij}=0$ otherwise. Define $q_{ij}$ in the same way from the blue density. Assign zero to both entries on irregular pairs and on the diagonal. Every positive entry is at least $\tau$.

We estimate $\Delta(p,q)$. For sufficiently large $N$,
equitability gives $c_i=2/M+O(1/N)$ uniformly in $i$. The irregular pairs contribute less than $20\varepsilon$, and the regular off-diagonal pairs contribute at most $4\tau$. The diagonal pairs contribute $\sum_i c_i^2=4/M+O(1/N)<5/M\le 5/L$. Therefore,
\begin{equation}\label{eq:template-small-defect}
\Delta(p,q)<20\varepsilon+4\tau+\frac{5}{L}<\eta.
\end{equation}
Suppose that $D_{p,q}(X)<\delta$, and let $A=\bigcup_{i\in X}V_i$ and $B=\bigcup_{i\notin X}V_i$. Since $|c(X)-1|<\delta$, both parts have order at least $(1-\delta)N/2$. Since $p_{ii}=0$ and $p_{ij}$ does not exceed the red density between $V_i$ and $V_j$ for $i\ne j$, the number of missing ordered red edges in $A$ is at most
\[\frac{N^2}{4}\sum_{i,j\in X}c_ic_j(1-p_{ij})
<\frac{\delta N^2}{4}.
\]
This estimate also accounts for the diagonal pairs, so
$|A|^2-2e_R(A)<\delta N^2/4$.
Thus $d_R(A)\geq 2e_R(A)/|A|^2>1-\delta/(1-\delta)^2$,
and the same lower bound holds for $d_R(B)$.
Similarly, since the cross term in $D_{p,q}(X)$ includes both
ordered directions,
\[
e_R(A,B)
\leq \frac{N^2}{4}
\sum_{\substack{i\in X\\j\notin X}}c_ic_j(1-q_{ij})
<\frac{\delta N^2}{8}.
\]
Consequently, $d_B(A,B)>1-\delta/[2(1-\delta)^2]$.
By the choice of $\delta$, the coloring is $\lambda$-near-split. The same conclusion, after exchanging the colors, follows from $D_{q,p}(X)<\delta$. Since the coloring in the proposition is not $\lambda$-near-split, we have
\begin{equation}\label{eq:template-far}
\spl(p,q)\geq\delta.
\end{equation}

Lemma \ref{lem:robust-core}, together with
\eqref{eq:template-small-defect} and \eqref{eq:template-far},
provides one color, weights $w$, and a connected non-bipartite
component $J$ of $G_w$ satisfying
\[
d_{w,J}(i)\leq(1-\sigma)\mu_w(J),
\quad
\Gamma(w,c;J)\geq\gamma,
\quad \text{and} \quad
w_{ij}>0\Longrightarrow w_{ij}\geq a_0\tau.
\]
The cluster sizes satisfy
\[
|V_i|=\frac{c_iN}{2}=c_i\left(k+\frac{\ell-1}{2}\right)=c_i k+O_{\ell}(1).
\]
Since $M\leq M_0$, equitability bounds every $c_i$ away from zero, and every positive $w_{ij}$ is at least $a_0\tau$. The error terms and the threshold for $k$ in Lemma \ref{lem:euler-words} are therefore uniform over all templates arising here. For the occurrence counts constructed in that lemma,
\[
b_i=k\frac{c_i d_{w,J}(i)}{\mu_w(J)}+O(1)
\leq(1-\sigma)c_i k+O(1)
\leq\left(1-\frac\sigma2\right)|V_i|
\]
for sufficiently large $k$ in terms of $\lambda$ and $\ell$. 
The hypothesis of Lemma~\ref{lem:euler-words} also holds with $M_i=|V_i|$, since $x_i=c_i d_{w,J}(i)/\mu_w(J)\le(1-\sigma)c_i$. 
It gives an ideal total weight at least 
\[
k!\exp\bigl((\gamma-o(1))k\bigr).
\]
Every positive support pair is regular in the chosen color and has actual density at least $w_{ij}\geq a_0\tau$. Applying Lemma \ref{lem:lifting} to every word, with loss $e^{-\gamma k/4}$, yields more than
\[
k!\exp\bigl((3\gamma/4-o(1))k\bigr)
\]
rooted directed simple monochromatic cycles. Distinct index words produce disjoint sets of vertex sequences because the regularity partition determines every index. Each unrooted unoriented $C_k$ has exactly $2k$ rooted directed representations. Hence
\[
\Nmono(C_k,\chi)
\geq H_m\exp\bigl((3\gamma/4-o(1))k\bigr).
\]
Taking $g=\gamma/2$ and increasing $k_1$ proves \eqref{eq:far-split-gain}.
\end{proof}

\section{Proof of Theorem \ref{thm:main}}
We now combine the preceding results to determine the exact value
of $M(C_k,2k-1+\ell)$ and characterize all colorings attaining
equality.

\begin{proof}[\bfseries{Proof of Theorem~\ref{thm:main}}]
Fix $\ell$, and let $\lambda_*$ be the constant in Proposition~\ref{prop:near-split}. 
Consider a red/blue edge-coloring $\chi$ of $K_N$. If it is not $\lambda_*$-near-split, then Proposition~\ref{prop:far-split} yields at least $e^{gk}H_m$ monochromatic copies of $C_k$, which is more than $F_\ell(k)$ by Lemma~\ref{lem:estimate-Fl(k)}.
If it is $\lambda_*$-near-split and already has more than $F_\ell(k)$ copies, there is nothing to prove. 
Otherwise, Proposition~\ref{prop:near-split} gives a partition $V(K_N)=P\cup Q$ into two cliques of one color, say red. Write $p=|P|$ and $q=|Q|$.

Every $k$-vertex subset of a red clique spans exactly $H_m$ Hamilton cycles. 
Thus the cycles wholly inside $P$ or $Q$ already number $(\binom pk+\binom qk)H_m$, which is at least $F_\ell(k)$ by Lemma~\ref{lem:balance}. 
This proves the lower bound. Both $\chi_0(\alpha,\beta)$ and $\chi_1(\alpha,\beta)$ attain this bound by the construction in the introduction.

Now suppose that equality holds, that is $\Nmono(C_k,\chi)=F_\ell(k)$. 
The case in which it is not $\lambda_*$-near-split is impossible,
and Proposition~\ref{prop:near-split} again gives a partition into red cliques $P,Q$. 
Lemma~\ref{lem:balance} forces $\{p,q\}=\{\alpha,\beta\}$, and the cycles inside the two cliques already account for all $F_\ell(k)$ copies. In particular, $p,q\ge k-1$.

There is at most one red edge between $P$ and $Q$. Suppose first that $p_1q_1$ and $p_2q_2$ are red edges with four distinct ends, where $p_i\in P$ and $q_i\in Q$. Choose a red path of length $k-3$ from $q_1$ to $q_2$ in $Q$. Such a path uses $k-2$ vertices and exists because $|Q|\ge k-1$. Together with $p_1q_1$, $q_2p_2$, and $p_2p_1$, it forms a red $C_k$ meeting both parts. If two red cross-edges instead have a common end in $P$, join their ends in $Q$ by a red path of length $k-2$. Adding the common end again forms a red $C_k$ meeting both parts. The case of a common end in $Q$ is symmetric. Each case produces an additional cycle, contrary to equality.

All cross-edges are consequently blue, with at most one exception. 
With no exception, $\chi$ is equivalent to $\chi_0(\alpha,\beta)$; with one exception, it is equivalent to $\chi_1(\alpha,\beta)$. 
This completes the proof.
\end{proof}

\section*{Declarations}
The authors declare that they have no known competing financial interests or personal relationships that could have appeared to influence the work reported in this paper.

\section*{Availability of Data and Materials}
Not applicable.

\section*{Acknowledgments}
This research was supported by the National Key R\&D Program of China under grant number 2024YFA1013900, the National Natural Science Foundation of China under grant number 12471327, and the China Postdoctoral Science Foundation under grant number 2026M793375.

\end{document}